\numberwithin{equation}{section}
\newtheorem{thm}{Theorem}[section]
\newtheorem{defi}{Definition}[section]
\newtheorem{prop}{Proposition}[section]
\newtheorem{lem}{Lemma}[section]
\newtheorem{rem}{Remark}[section]
\newcommand{\R}{\mathbb{R}} 
\newcommand{\N}{\mathbb{N}}
\newcommand{\xa}{x^{(a)}}
\newcommand{\xb}{x^{(b)}}
\newcommand{\xc}{x^{(c)}}
\newcommand{\xd}{x^{(d)}}
\newcommand{\va}{v^{(a)}}
\newcommand{\vb}{v^{(b)}}
\newcommand{\vc}{v^{(c)}}
\newcommand{\vd}{v^{(d)}}
\newcommand{\Rv}{\tau R^0_V}
\newcommand{\ci}{\left\langle}
\newcommand{\cd}{\right\rangle}
\newcommand{\vu}{\mathbf{v}}
\newcommand{\D}{\mathcal{D}}
\newcommand{\dabv}{d^{(ab)\vu}_V}
\title{The Cucker-Smale model with time delay}
\author{Mauro Rodriguez Cartabia} 
\address{IMAS (UBA-CONICET) and Departamento de Matem\'atica, Facultad de Ciencias Exactas y Naturales, Universidad de Buenos Aires, Ciudad Universitaria, 1428 Buenos Aires, Argentina.}
\email{mrodriguezcartabia@gmail.com, mrodriguezcartabia@dm.uba.ar}
\thanks{The author would like to thank P. Amster for his valuable remarks on this manuscript.} 
\begin{document}

\keywords{Cucker–Smale model, time delay, flocking}

\begin{abstract}
  We study  the classical Cucker-Smale model in continuous time  with a positive time delay $\tau$. As in the   non-delayed case, unconditional flocking occurs when  $\beta \leq 1/2$ for every $\tau>0$.  Furthermore, we prove the exponential decay   for the diameter of the velocities.
\end{abstract}
 
 \maketitle
 
\section{Introduction}

In this paper we extend the original model of Cucker-Smale for continuous time prensent in the article \cite{cucker2007emergent}. It provides a modelization of reaching consensus without a central direction. This study can be  applied to model the collective animal behavior and it is an important part of the fields of developmental biology, neuroscience,
behavioral ecology, sociology \cite{sumpter2010collective},  bacterial colonies \cite{vicsek1995novel}, unmanned vehicles \cite{chuang2007multi}.
 
Given a finite set   $A$  of particles  (we keep this denomination throughout this article, but they can be birds, fishes, robots, etc.)  we reserve the capital letter $N$ to denote the size, this is,  $\# A=N\geq 2$. For  $a \in A$ we consider   the pair    $(\xa(t),\va(t))\in \R^{2d}$  that describes the position and velocity at time $t$, respectively. For physical reasons  we might work in the  three dimensional space setting $d=3$ (also in the two-dimensional setting  $d=2$), but no assumption is needed so we work with arbitrary $d$.  The interactions are modeled by 
$$\frac{d}{dt} v^{(a)}(t)=\sum_{b\in A } \tilde H^{(ab)}(t) \left( \vb(t)-\va(t)\right)$$
where  $\tilde H^{(ab)}(t)$ is  the influence function between  $a$ and  $b$ and depends on the positions   $\xa(t)$ and  $\xb(t)$. In the classical case it is defined by
\begin{equation}\label{ecuacionCS}
 \tilde  H^{(ab)}(t):=\frac{\tilde K}{\left(\sigma^2+\left[\xa(t)-\xb(t)\right]^2\right)^\beta}
\end{equation}
with $\tilde K>0$, $\sigma>0$ and $\beta\geq 0$   fixed.
 % As in \cite{cucker2007emergent} is explaid, the distance funtion could be the usual distance in Euclidean space and over large distances the influences could become negligible as in gravity. \textcolor{red}{Esta oración está textual del paper de C-S, ¿la cambio, aclaro que es textual?}

We extend this model for a fixed delay $\tau>0$ by considering, for each $a\in A$, the system
\begin{equation}\label{sistemaprincipal}
  \left\{\begin{aligned}
    x^{(a)}(t)&=x^{(a)}_0(t)  & \text{ if } t\in[-\tau,0],\\
    v^{(a)}(t)&=v^{(a)}_0(t)  & \text{ if } t\in[-\tau,0],\\
    \frac{d}{dt}x^{(a)}(t)&=v^{(a)}(t) & \text{ if } t\in(-\tau,\infty),\\
    \frac{d}{dt} v^{(a)}(t)&=\sum_{b:b\neq a} H^{(ab)}(t) \left( \vb(t-\tau)-\va(t)\right) &\text{ if } t\in(0,\infty) 
    \end{aligned}
  \right.
\end{equation}
where  
\begin{equation}\label{definicionH}
  H^{(ab)}(t):=\frac{1}{N-1}\psi\left( \left| x^{(a)}(t) -\xb(t-\tau)\right| \right), 
\end{equation}
and $\psi$ is a Lipschitz, non-negative and non-increasing function.  In this system and throughout   this article we adopt the convention that the indices $a,b,c,d$ in a summation are always understood to range over the set of labels $A$. Here $x^{(a)}_0$ and $v^{(a)}_0$ are the initial conditions which in the case of time delay must be time functions defined on the interval $[-\tau,0]$. Without loss of generality, it may be assumed that $(d/dt)\xa_0=\va_0$ for each $a\in A$, but we do not impose this   restriction.

In the following definition, as usual, we denote by  $d_X$ and $d_V$ the  diameters  that measure the maximum distance of positions and velocities between particles ($\Gamma$ and $\Lambda$, respectively, in the original paper  \cite{cucker2007emergent}).

\begin{defi}\label{definicionesd}
We define
\begin{equation*}
  \begin{split}
  d_X(t)&:=\max_{a, b\in A }\left|\xa(t)-\xb(t)\right|,\\
  d_V(t)&:=\max_{a, b\in A }\left|\va(t)-\vb(t)\right|.
  \end{split}
\end{equation*}
We say that a solution $\left(\xa,\va\right)_{a\in A}$ of \eqref{sistemaprincipal}  has  \emph{asymptotic flocking} if
\begin{equation}
\sup_{t\geq 0}d_X(t)<\infty \quad \text{ and }\quad \lim_{t\to\infty}d_V(t)=0.
\end{equation}
\end{defi}

We remark that our model follows the outline of \cite{choi2016cucker}, where normalized communication weights are employed. A similar extension of the Cucker-Smale model was introduced in 
\cite{choi2018emergent}, where the authors proved flocking for   specific initial conditions and delayed times.  We may observe that system (\ref{sistemaprincipal}) can be regarded as a natural way to incorporate a time delay, 
in the sense that each  particle `knows' its position and velocity instantly, but perceives the others at a retarded time.   There are also other works that incorporate time delay in differents forms (see \cite{erban2016cucker,   pignotti2017convergence}).  

Theorem \ref{maintheorem}   is the main result of this article and, to the best of our knowledge,  is the first one that fully extends  the result for the non-delayed case to  the system \eqref{sistemaprincipal}.  It proves that if the integral of the Lipschitz function $\psi$  (see \eqref{definicionH}) diverges then there is an asymptotic flocking independently of the initial conditions. This situation is called \textit{unconditional flocking} and it is worth mentioning that our result does not require conditions on the delay. We use   $\psi$  because it is more general than \eqref{ecuacionCS}.  In the particular case of using the latter  the theorem proves unconditional flocking if $\beta\leq 1/2$ (see Remark \ref{cuckersmale} to deepen this situation) and we show in Section \ref{ejemplos}  that there is no unconditional flocking when $\beta>1/2$.  This is exactly the same result that is valid for the non-delayed case where in \cite{cucker2007emergent} the authors have proved unconditional flocking if $\beta<1/2$ and that it does not occur when $\beta>1/2$. Also in  \cite[Proposition 4.3]{ha2009simple}  unconditional flocking is extended to the case $\beta=1/2$.

In order to find an upper bound for $d_X$, as in \cite{choi2016cucker}, we shall follow the ideas developed in  \cite{ha2009simple}. However, unlike that model,  in ours is not true that  
\begin{equation}\label{novalen}
  \begin{split}
  \frac{d}{dt}d_V(t)&\leq \alpha(1-\delta)d_V(t-\tau)-\alpha d_V(t), \\
  \text{neither}\qquad d_V(t)&\leq \max_{s\in[-\tau,0]}d_V(s) e^{-\tilde Ct},
  \end{split}
\end{equation}
for some positive  constants $\alpha$, $\delta$ and $\tilde C$ with $t\geq 0$ (to clarify this issue see Figure \ref{figura1} and Example \ref{ejemplo1} below). % This focus to face a different kind of proof, facing 
% The reason  lies in the fact that in the interval $[0,\tau]$ the particles are governed by the behavior of the others on time $[-\tau,0]$ and in this interval there is no control over the behavior of $(d/dt)\va$.
% \textcolor{blue}{Esto muestra que el paper \cite{liu2014flocking} está mal.}
% \begin{rem}
% Muy importante: ya no tenemos que el momento de todos los agentes es constante como en el caso sin delay.
% \end{rem}

The paper is organized as follows. In Section  \ref{seccionteorema} we state the main theorem of this work and dedicate  Section \ref{mainproof} to prove this. Finally, in Section \ref{ejemplos} we study the case when  $\beta>1/2$ and give examples and simulations to illustrate the problem and the issues mentioned in \eqref{novalen}.
                     
\section{Main theorem}\label{seccionteorema}
 
\begin{thm}\label{maintheorem}
Given $\tau>0$  there is  a global solution in time for the system \eqref{sistemaprincipal}. If the function $\psi$ defined in \eqref{definicionH} satisfies that
\begin{equation}\label{hipotesisteorema}
  \int_0^\infty\psi(t)\, dt=\infty
\end{equation}
then there exists a constants $d^*\geq 0$ such that  
$$\sup_{t\geq -\tau}d_X(t)\leq d^*$$
and a constant $C>0$ (that depends on $\psi$,   $\tau$ and the initial condition and is independent of $t$ and $N$, see   \eqref{definicionC}) such that 
\begin{equation}\label{decrecimientoexponencial}
  d_V(t)\leq   \left(  \max_{a,\,b\in A}\max_{s,\,t\in[-\tau,0]} \left|\va(s)-\vb(t)\right|\right) e^{-C(t-2\tau)}  
\end{equation}
for all $t\geq -\tau$.
\end{thm}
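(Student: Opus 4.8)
The plan is to separate the three assertions---global existence, boundedness of \(d_X\), and exponential decay of \(d_V\)---and to run the two quantitative parts through a fixed direction \(\vu\). For existence I would use the method of steps: on \((0,\tau]\) the retarded arguments \(v^{(b)}(t-\tau)\) and \(x^{(b)}(t-\tau)\) are fixed by the initial data, so the last line of \eqref{sistemaprincipal} is an ordinary differential system whose right-hand side is continuous and locally Lipschitz in the unknowns \((x^{(a)},v^{(a)})_{a}\) because \(\psi\) is Lipschitz; Cauchy--Lipschitz gives a solution on \((0,\tau]\), and one iterates on \((\tau,2\tau]\), and so on. To continue indefinitely I would prove the a priori bound \(|v^{(a)}(t)|\le R^0_V\) for \(t\ge-\tau\), where \(R^0_V\) is the largest initial speed. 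Writing \(w^{(a)}=\langle v^{(a)},\vu\rangle\), the projection satisfies \(\tfrac{d}{dt}w^{(a)}(t)=\sum_{c}H^{(ac)}(t)\,(w^{(c)}(t-\tau)-w^{(a)}(t))\); at a time where \(w^{(a)}\) attains \(M(t)=\max_c w^{(c)}(t)\) the Dini derivative obeys \(\tfrac{d}{dt}M(t)\le\lambda^{(a)}(t)\,(M(t-\tau)-M(t))\) with \(\lambda^{(a)}=\sum_c H^{(ac)}\ge0\), so \(M\) can never rise above its maximum on \([-\tau,0]\); the symmetric statement for \(\min_c w^{(c)}\) and the freedom in \(\vu\) give the velocity bound and hence global existence.

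The velocity bound is what keeps the coupling non-degenerate. Since positions obey the un-delayed law \(\tfrac{d}{dt}x^{(a)}=v^{(a)}\), one has \(|x^{(a)}(t)-x^{(b)}(t-\tau)|\le d_X(t)+\Rv\), and because \(\psi\) is non-increasing this yields the uniform lower bound \(H^{(ab)}(t)\ge\frac1{N-1}\,\psit\). I would phrase the whole decay analysis in terms of the directional differences \(\dabv(t)=\langle v^{(a)}(t)-v^{(b)}(t),\vu\rangle\) and the directional diameter \(\max_c w^{(c)}(t)-\min_c w^{(c)}(t)\), using the identity \(d_V(t)=\sup_{|\vu|=1}\bigl(\max_c w^{(c)}(t)-\min_c w^{(c)}(t)\bigr)\); this reduces matters to scalar delay inequalities and commutes with the final supremum over \(\vu\).

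To bound \(d_X\) I would adapt the Lyapunov argument of \cite{ha2009simple} used in \cite{choi2016cucker}. In the un-delayed case one couples \(\tfrac{d}{dt}d_X\le d_V\) with a dissipation estimate \(\tfrac{d}{dt}d_V\le-\psi(d_X)\,d_V\) and checks that \(d_V+\int_0^{d_X}\psi\) is non-increasing, so that the divergence \eqref{hipotesisteorema} confines \(d_X\). The difficulty, displayed in \eqref{novalen}, is that the retarded comparison \(w^{(c)}(t-\tau)\) may exceed the current extreme \(M(t)\), so the pointwise dissipation fails. My remedy is to carry the delay as a windowed remainder: differentiating the directional diameter at the extremal pair produces, besides \(-\psit\) times the diameter, error terms \(w^{(c)}(t-\tau)-w^{(c)}(t)\) that I would bound by \(\int_{t-\tau}^t|\tfrac{d}{dt}w^{(c)}|\le\psi(0)\int_{t-\tau}^t\bigl(\max_e w^{(e)}-\min_e w^{(e)}\bigr)\). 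Augmenting the Ha--Liu functional with a weighted \(\int_{t-\tau}^t d_V\) term should make its derivative non-positive and give \(\sup_{t\ge-\tau}d_X(t)\le d^\ast\), with \(d^\ast\) fixed by requiring \(\int_0^{d^\ast}\psi(\Rv+r)\,dr\) to equal the initial value of the functional.

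Once \(d_X\le d^\ast\) the lower bound is uniform, \(\psit\ge\psi(\Rv+d^\ast)=:\psi^\ast>0\), and it remains to upgrade boundedness to the rate \eqref{decrecimientoexponencial}. I would avoid a pointwise Halanay inequality, whose natural remainder carries a factor \(\psi(0)\tau\) that is not small for large \(\tau\), and instead track, for each \(\vu\), the directional two-time windowed diameter \(\sup_{s,u\in[t-\tau,t]}\bigl(\max_c w^{(c)}(s)-\min_c w^{(c)}(u)\bigr)\), whose non-increase follows from the Dini computation of the first paragraph. The aim is a strict geometric contraction by a factor \(\kappa=\kappa(\psi^\ast,\psi(0),\tau)<1\) over each window of length \(\tau\): within a window the weight \(\psi^\ast/(N-1)\) that always multiplies the most retarded minimum forces a definite drop of the running maximum, and symmetrically a rise of the running minimum, whenever the diameter is positive. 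Since this contraction is uniform in \(\vu\), it transfers to \(\D(t)=\max_{a,b}\sup_{s,u\in[t-\tau,t]}|v^{(a)}(s)-v^{(b)}(u)|\), and iterating over consecutive windows gives \(d_V(t)\le\D(t)\le\D(0)\,e^{-C(t-2\tau)}\) with \(\D(0)\) equal to the prefactor of \eqref{decrecimientoexponencial}, the shift \(2\tau\) accounting for the two initial windows consumed by the iteration. The main obstacle is exactly this uniform-in-\(\tau\) contraction: one has to show that when \(\int\psi=\infty\) the dissipation dominates the delayed feedback for \emph{every} \(\tau>0\), which is where the failure of \eqref{novalen} must be circumvented rather than patched.
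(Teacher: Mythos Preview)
Your existence argument and the projection calculus leading to $|v^{(a)}(t)|\le R^0_V$ are correct and coincide with the paper's Theorem~\ref{existencialocal} and Lemma~\ref{primerlema}. The two quantitative parts, however, contain a genuine gap.

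\textbf{The single-window contraction fails.} You propose to obtain a fixed factor $\kappa<1$ such that the two-time windowed diameter
\[
\D(t)=\max_{a,b}\sup_{s,u\in[t-\tau,t]}|v^{(a)}(s)-v^{(b)}(u)|
\]
satisfies $\D(t+\tau)\le\kappa\,\D(t)$. Example~\ref{ejemplo1} of the paper rules this out: there one has $d_V(t)=I_0=I_1$ for all $t\in[-\tau,-\varepsilon]\cup[0,\tau-\varepsilon]$, so the windowed diameter is \emph{exactly} constant across one full window and no $\kappa<1$ exists. The paper's remedy is not to sharpen the one-step argument but to accept a longer lag: Lemma~\ref{mainlemma} proves the three-step contraction
\[
I_{n+1}\le\Bigl(1-e^{-K\tau}\!\int_{n\tau-2\tau}^{n\tau-\tau}\phi(s)\,ds\Bigr)I_{n-2},
\]
via a case split on whether $\dabv$ changes sign on $[n\tau-2\tau,n\tau-\tau]$. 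The $I_{n-2}$ on the right is not slack; it is what allows the argument to survive Example~\ref{ejemplo1}.

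\textbf{The augmented Ha--Liu functional does not close without a smallness hypothesis on $\tau$.} You bound the delay error by $\psi(0)\int_{t-\tau}^{t}d_V$ and propose to absorb it by adding a weighted $\int_{t-\tau}^{t}d_V$ to the functional. The natural choice that actually produces $-\int_{t-\tau}^{t}d_V$ upon differentiation is $\int_{-\tau}^{0}\!\int_{t+s}^{t}d_V(r)\,dr\,ds$, but its derivative also throws off a term $+\tau\,d_V(t)$; cancelling that against the dissipation $-\psit\,d_V(t)$ forces $\psit\ge K\tau$, i.e.\ exactly the small-delay regime the theorem is meant to avoid. The paper sidesteps this by \emph{not} trying to make $d_V$ itself a supersolution. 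It first builds the auxiliary function $\D(t)$ from the three-step contraction of Lemma~\ref{mainlemma} (so $\D$ already decreases at a rate governed by $\phi$), and only then forms
\[
\mathcal{L}(t)=\D(t)+\frac{e^{-K\tau}}{3}\int_{0}^{\Rv+\max_{s\le t}d_X(s)}\min\Bigl\{e^{-K\tau}\psi(r),\,\tfrac{e^{-2K\tau}}{\tau}\Bigr\}dr.
\]
The point is that $\tfrac{d}{dt}\D(t)\le -\tfrac{e^{-K\tau}}{3}\phi(t)\,\D(n\tau)$ and $d_V(t)\le I_{n+1}\le\D(n\tau)$, so $\tfrac{d}{dt}\mathcal{L}\le0$ with no leftover $\tau$-dependent term. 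The divergence \eqref{hipotesisteorema} then caps $d_X$, after which $\phi$ has a positive infimum and the three-step contraction becomes geometric, yielding \eqref{decrecimientoexponencial}. The pieces you identify---projections, windowed diameters, a Ha--Liu-type coupling---are all present, but the specific three-step lag and the use of $\D$ rather than $d_V$ in the Lyapunov functional are the ideas your outline is missing.
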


\begin{rem}\label{cuckersmale} Recall the influence function given in \eqref{ecuacionCS}. In  the non-delayed case there is unconditional flocking if and only if $\beta\leq 1/2$. With   hypothesis \eqref{hipotesisteorema} we extend this result to the case with time delay since
$$\beta\leq \frac12 \qquad \text{ if and only if } \qquad \int_0^\infty\frac{\tilde K}{\left(\sigma^2+t^2\right)^\beta}\, dt=\infty $$
and in    Proposition  \ref{dosparticulas} we show there is no  unconditional flocking when $\beta>1/2$.
\end{rem}

\section{Proof of   theorem \ref{maintheorem}}\label{mainproof}

To begin with, we   establish in Subsection \ref{preliminares} several definitions we use in the rest of this work and classical results of existence of (local) solution. Then in Subsection \ref{lemasauxiliares} we announce and prove the auxiliary lemmas, including the global existence of solution. Finally,  in Subsection   \ref{flocking} we prove   theorem \ref{maintheorem}.

\subsection{Preliminaries}\label{preliminares}

\begin{defi}\label{definiciones} We recall the definitions of $d_X$ and $d_V$ given in Definition \ref{definicionesd}. Given two particles $a,\,b\in A$, a  vector $\vu\in\R^d$ and  for each $n\in\N_0$, we   define
\begin{equation*}
  \begin{split}
  I_n&:=\max_{c,\, d\in A}\quad  \max_{s,t\in[n\tau-\tau,n\tau]}\left|\vc(s)-\vd(t)\right|,\\
  K&:= \psi(0),\\
  R^0_V&:=\max_{c\in A}\max_{s\in[-\tau,0]}|\vc(s)|,\\
  \dabv(t)&:=\ci\va(t)-\vb(t),\vu\cd,\\
  \phi(t)&:=\min\left\{ e^{-K\tau} \psi\left(\Rv+\max_{s\in[0,t]}d_X(s)\right), \frac{e^{-2K\tau}}{\tau}\right\}
  \end{split}
\end{equation*}
for $t\geq 0$, where $\ci \cdot,\cdot\cd$ is the usual inner product.
\end{defi}
Note that  $I_n$   measures the diameter of the velocities in the entire interval  \linebreak  $ [n\tau-\tau,n\tau]$ of time and that $I_0$ is one of the constants in \eqref{decrecimientoexponencial}. 

Next we prove the existence of solution for the system \eqref{sistemaprincipal} on the interval of time $[0,\tau]$,  although it is classical   we provide the details here for the convenience of readers (we refer to Chapter 3 of \cite{smith2011introduction}, see also \cite{hale1977theory}).

\begin{thm}\label{existencialocal}  
The system \eqref{sistemaprincipal} has a unique  solution on $[0,\tau]$ which is $\mathcal{C}^1$ on $(0,\tau)$.
\end{thm}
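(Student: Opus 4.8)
The plan is to use the classical method of steps for delay differential equations, which on the very first interval $[0,\tau]$ collapses the delayed system into an ordinary (non-delayed) initial value problem. The key observation is that for $t\in[0,\tau]$ the retarded argument satisfies $t-\tau\in[-\tau,0]$, so by the first two lines of \eqref{sistemaprincipal} the quantities $\xb(t-\tau)=x^{(b)}_0(t-\tau)$ and $\vb(t-\tau)=v^{(b)}_0(t-\tau)$ are prescribed, \emph{known} functions of $t$. Substituting them into \eqref{definicionH} and into the last line of \eqref{sistemaprincipal}, the system becomes
\begin{equation*}
\frac{d}{dt}\xa(t)=\va(t),\qquad
\frac{d}{dt}\va(t)=\sum_{b:b\neq a}\frac{1}{N-1}\psi\!\left(\left|\xa(t)-x^{(b)}_0(t-\tau)\right|\right)\left(v^{(b)}_0(t-\tau)-\va(t)\right),
\end{equation*}
with initial data $\xa(0)=x^{(a)}_0(0)$ and $\va(0)=v^{(a)}_0(0)$. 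Collecting all the particles into a single state $y=(\xa,\va)_{a\in A}\in\R^{2dN}$, this reads $\dot y=F(t,y)$ for an explicit right-hand side $F$, with no delay remaining.

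Next I would verify that $F$ meets the hypotheses of the Picard--Lindel\"of theorem. Continuity of $F$ in $t$ follows from the continuity of $\psi$ and of the prescribed initial data on $[-\tau,0]$. For the Lipschitz dependence on $y$, the map $\xa\mapsto\psi\!\left(\left|\xa-x^{(b)}_0(t-\tau)\right|\right)$ is Lipschitz because $\psi$ is Lipschitz and $\xa\mapsto|\xa-c|$ is $1$-Lipschitz, while the factor $v^{(b)}_0(t-\tau)-\va$ is affine in $\va$ and bounded on bounded sets; combined with the uniform bound $0\le\psi\le\psi(0)=K$, this shows that $F$ is locally Lipschitz in $y$, uniformly for $t\in[0,\tau]$. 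Picard--Lindel\"of then yields a unique solution on a maximal subinterval of $[0,\tau]$, which is $\mathcal{C}^1$ there since $F(t,y(t))$ is continuous.

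Finally I would rule out finite-time blow-up so that this local solution is defined on all of $[0,\tau]$. Using $0\le\psi\le K$ together with $|v^{(b)}_0(t-\tau)|\le R^0_V$ and averaging over the $N-1$ terms, the velocity equation gives the differential inequality $\frac{d}{dt}|\va(t)|\le K\left(R^0_V+|\va(t)|\right)$, so Gr\"onwall's inequality bounds each $|\va|$ on $[0,\tau]$; since $|\dot\xa|=|\va|$, the positions stay bounded as well. Hence $y$ cannot escape to infinity before $t=\tau$, the maximal interval is all of $[0,\tau]$, and uniqueness is inherited from Picard--Lindel\"of. The solution is $\mathcal{C}^1$ on the open interval $(0,\tau)$ because there $F(t,y(t))$ is continuous; one should not expect $\mathcal{C}^1$ regularity up to $t=0$, since the one-sided derivative of $\va$ forced by the equation need not match that of the initial datum $v^{(a)}_0$. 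I do not anticipate a serious obstacle here: the conceptual content is just the reduction to an ODE by the method of steps, and the only genuine work is the Lipschitz verification and the Gr\"onwall a priori bound.
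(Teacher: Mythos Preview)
Your proposal is correct and follows the same core strategy as the paper: on $[0,\tau]$ the retarded arguments fall in the initial interval, so the delay system collapses to an ODE, and one then applies Picard--Lindel\"of. Two minor implementation differences are worth noting. First, the paper observes that on this first interval the equations for the different particles actually \emph{decouple} (particle $a$'s right-hand side depends only on $\xa(t),\va(t)$ and on the known data $\xb_0,\vb_0$), so it solves for each $a$ separately in $\R^{2d}$ rather than packaging everything into $\R^{2dN}$ as you do. Second, the paper asserts a \emph{uniform} (global) Lipschitz constant for $f$, using $\psi\le K$ and $|\vb(t-\tau)|\le R_V^0$, which yields existence on all of $[0,\tau]$ in one stroke; you instead claim only local Lipschitz and then add a Gr\"onwall a priori bound on $|\va|$ to rule out blow-up. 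Your variant is a valid---and arguably more careful---route to the same conclusion.
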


\begin{proof} For a fixed $a\in A$, we consider the system
\begin{equation}\label{soluciona}
  \left\{\begin{aligned}
  \left(\xb(t),\vb(t)\right )&=\left(x^{(b)}_0(t),v^{(b)}_0(t) \right) & \text{ if } t\in[-\tau,0], \,b\in A\\
  \frac{d}{dt}\left(\xa(t),\va(t)\right)&= f\left(t,\xa(t),\va(t)\right)& \text{ if } t\in(0,\tau],
  \end{aligned}
  \right.
\end{equation}
where
$$f\left(t,\xa(t),\va(t)\right):=\left(\va(t),\sum_{b:b\neq a} H^{(ab)}(t) \left( \vb(t-\tau)-\va(t)\right)\right).$$ 
Recall Definition \eqref{definicionH} and note that here all functions $(\xb,\vb)$, with $b\neq a$, are defined and depend in time   on the interval $[-\tau,0]$. We want to show the existence of solution for the interval $[0,\tau]$. Therefore we claim that $f(t,x,v)$ is uniformly Lipschitz in $(x,v)$ and continuous in time. By  definition, we have that 
$$\psi\left(\left|x-\xb(t-\tau)\right|\right)\leq \psi(0)=K $$
and that 
$$\left|\vb(t-\tau)\right|\leq R_V^0$$
for each $b\in A$ and $t\in[ 0,\tau]$. Then, for $t\in[0,\tau]$, 
\begin{align*}
  &\left| \psi\left(\left|x-\xb(t-\tau)\right|\right) \left( \vb(t-\tau)-v \right)-\psi\left(\left|y-\xb(t-\tau)\right|\right) \left( \vb(t-\tau)-w \right)\right|\\
  &\qquad\leq \left|\vb(t-\tau)\right|\left| \psi\left(\left|x-\xb(t-\tau)\right|\right) -\psi\left(\left|y-\xb(t-\tau)\right|\right)  \right| +K\left| v-w\right|\\
  &\qquad\leq R_V^0\left|\text{Lip}(\psi)  \left(\left|x-\xb(t-\tau)\right|-\left|y-\xb(t-\tau)\right|\right)  \right| +K\left| v-w\right|\\
  &\qquad\leq R_V^0 \text{Lip}(\psi)  \left|x-y \right|    +K\left| v-w\right|
\end{align*}
where we are denoting by  Lip$(\psi)$   the  Lipschitz constant of $\psi$ on $[0,\tau]$.  Therefore
\begin{align*}
  \left|f(t,x,v)-f(t,y,w)\right|&\leq |v-w|\\
  &\quad+\sum_{b:b\neq a}\frac{1}{N-1}\left| \psi\left(\left|x-\xb(t-\tau)\right|\right) \left( \vb(t-\tau)-v \right)\right.\\
  &\hspace{2cm} \left.-\psi\left(\left|y-\xb(t-\tau)\right|\right) \left( \vb(t-\tau)-w \right)\right|\\
  &\leq  \left(1+\frac{K}{N-1}\right) |v-w|+\frac{R_V^0  \text{Lip}(\psi)}{N-1}|x-y|,\\
  &\leq L|(x,v)-(y,w)|
\end{align*}
if we denote 
\begin{equation*}%\label{defincionL}
  L:= 2\max\left\{\left(1+\frac{K}{N-1}\right),\frac{R_V^0  \text{Lip}(\psi)}{N-1} \right\}.
\end{equation*}
Note that $L$ is independent of time, which means that $f$ is uniformly Lipschitz in $(x,v)$, and is continuous in time because $H^{(ab)}(t)$ is upper bounded and all functions $(\xb,\vb)$ are continuous in time. Then by the Picard-Lindel\"of Theorem (or Cauchy-Lipschitz Theorem)  there exists a unique  $(\xa,\va)$   solution to \eqref{soluciona}. Repeating this argument we get a solution for   all $a\in A$ and we obtain a unique solution for the system \eqref{sistemaprincipal} on the interval $[0,\tau]$ which is  $\mathcal{C}^1$ on $(0,\tau)$.

%. Furthermore, because this we can repeat the reasoning  for the system \eqref{soluciona} with initial condition on the interval $[0,  \tau]$ and get a unique solution on $[\tau, 2\tau]$. Extending this reasoning for all intervals of the form $[n\tau,n\tau+\tau]$, with $n\in \N$,  we  get a unique global solution of the system \eqref{sistemaprincipal} for all $t\in[-\tau,\infty)$.
\end{proof}

\subsection{Auxiliary lemmas}\label{lemasauxiliares}

Although the  following lemma is  similar   to     \cite[Lemma 2.1]{choi2016cucker} and    \cite[Lemma 2.1]{choi2018emergent}, we provide the details here for the convenience of reader.
 
\begin{lem}\label{primerlema} There exists a unique global  solution for the system \eqref{sistemaprincipal} which is $\mathcal{C}^1$ for positive times. Also for each  $n\in \N_0$, vector $\vu$ and $a\in A$,   we have   that 
\begin{equation}\label{cotavelocidad}
\min_{c\in A}\min_{s\in[n\tau-\tau,n\tau]}\ci\vc(s),\vu\cd\leq \ci\va(t),\vu\cd\leq \max_{c\in A}\max_{s\in[n\tau-\tau,n\tau]}\ci\vc(s),\vu\cd 
\end{equation}
for all $t\geq (n\tau-\tau)$. In particular, we have that $I_n\geq I_{n+1}.$ 

As well, for each $a,\,b\in A$, we get that
\begin{equation}\label{cotaRV0}
  |\va(t)|\leq R_V^0
\end{equation}  for all $t\geq -\tau$ and  
\begin{equation}\label{cotadX}
  \left|\xa(t-\tau)-\xb(t)\right| \leq  \Rv+d_X(t-\tau)
\end{equation}
for all $t\geq 0$.
\end{lem}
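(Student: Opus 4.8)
The plan is to derive every assertion from two ingredients: the \emph{method of steps} for the delay system, and a \emph{convex-hull (maximum-principle) estimate} for the projected velocities, from which \eqref{cotaRV0}, \eqref{cotadX} and the monotonicity $I_n\geq I_{n+1}$ all follow. For global existence I would iterate Theorem \ref{existencialocal} slab by slab: on $[n\tau,(n+1)\tau]$ the delayed arguments $\xb(t-\tau),\vb(t-\tau)$ are already fixed by the solution on $[(n-1)\tau,n\tau]$, so the system is an ODE whose right-hand side is, exactly as in Theorem \ref{existencialocal}, uniformly Lipschitz in $(x,v)$ and continuous in $t$, giving a unique $\mathcal{C}^1$ solution on an initial subinterval. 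To rule out blow-up I invoke the a priori bound \eqref{cotaRV0} (whose proof on the slab uses only the already-constructed data on $[n\tau-\tau,n\tau]$, so there is no circularity): writing $\dot\va=(\text{bounded forcing})-\big(\sum_b H^{(ab)}\big)\va$ with $\sum_b H^{(ab)}\geq 0$, the velocities remain bounded by $R^0_V$ and the positions grow at most linearly, so the local solution reaches the end of each slab. Iterating over $n$ yields the global $\mathcal{C}^1$ solution.

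The core is \eqref{cotavelocidad}. Fix $n$ and a vector $\vu$, write $g_c(t):=\ci\vc(t),\vu\cd$ and $M:=\max_{c}\max_{s\in[n\tau-\tau,n\tau]}g_c(s)$; I claim $g_a(t)\leq M$ for all $a$ and $t\geq n\tau-\tau$, the lower bound being identical after replacing $\vu$ by $-\vu$. Along the flow,
$$g_a'(t)=\sum_{b:b\neq a}H^{(ab)}(t)\bigl(g_b(t-\tau)-g_a(t)\bigr),$$
and the decisive point is $H^{(ab)}(t)\geq 0$: at any time where some $g_a$ attains the running maximum $M$, each delayed value satisfies $g_b(t-\tau)\leq M$ because $t-\tau\in[n\tau-\tau,t)$, where the bound already holds, so every summand is $\leq 0$ and $g_a'(t)\leq 0$. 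Thus $g_a$ cannot cross $M$ from below. To make this rigorous I run a first-crossing argument against the strict supersolution $M+\epsilon(1+t)$: if $T_\epsilon$ is the first time some $g_a$ meets this barrier, then necessarily $T_\epsilon>n\tau$, the delayed values obey $g_b(T_\epsilon-\tau)<M+\epsilon(1+T_\epsilon)=g_a(T_\epsilon)$, whence $g_a'(T_\epsilon)\leq 0$, contradicting that $g_a$ overtakes an increasing barrier; letting $\epsilon\to0$ proves the claim. Choosing $\vu=\va(t)/|\va(t)|$ with $n=0$ then gives \eqref{cotaRV0} at once.

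The remaining items are elementary consequences. For $I_n\geq I_{n+1}$: given $c,d$ and $s,t$ in the window $[n\tau,(n+1)\tau]$, take $\vu$ a unit vector along $\vc(s)-\vd(t)$, so that $|\vc(s)-\vd(t)|=g_c(s)-g_d(t)$; by \eqref{cotavelocidad} the first term is bounded above by the window-maximum of $g$ and the second below by the window-minimum, both attained at points whose difference is $\leq I_n$, and Cauchy--Schwarz closes the estimate. For \eqref{cotadX} I split
$$\xa(t-\tau)-\xb(t)=\bigl(\xa(t-\tau)-\xb(t-\tau)\bigr)+\bigl(\xb(t-\tau)-\xb(t)\bigr),$$
bounding the first bracket by $d_X(t-\tau)$ and the second by $\int_{t-\tau}^{t}|\vb(s)|\,ds\leq \tau R^0_V$ via \eqref{cotaRV0}, which gives the bound $\Rv+d_X(t-\tau)$.

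I expect the convex-hull estimate \eqref{cotavelocidad} to be the main obstacle. The delicate points are feeding the delay correctly into the maximum principle—the window must have length exactly $\tau$ so that $t-\tau$ always falls back into a region where the bound is already known—and making the heuristic ``the derivative at the maximum is nonpositive'' rigorous despite the possible corners of the solution at the junctions $t=k\tau$; this is precisely why I would argue with one-sided derivatives against a strict supersolution rather than at the maximum directly.
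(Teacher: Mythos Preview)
Your proposal is correct and follows essentially the same route as the paper: method of steps for global existence, a convex-hull/maximum-principle argument for \eqref{cotavelocidad}, from which \eqref{cotaRV0} and $I_n\geq I_{n+1}$ follow by choosing $\vu$ along $\va(t)$ and along the relevant velocity difference, and the same triangle-inequality split for \eqref{cotadX}. The only difference is in the execution of the maximum principle---you argue via a strict barrier $M+\epsilon(1+t)$, whereas the paper uses a mean-value-theorem contradiction on each slab; this is a minor technical variation, and since the solution is in fact $\mathcal{C}^1$ on all of $(0,\infty)$, the corners at $t=k\tau$ that motivated your barrier argument do not actually occur for $k\geq 1$.
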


\begin{proof}
To begin with, fix a vector $\vu$. We want to prove the inequality \eqref{cotavelocidad} for the case $n=0$. We proceed by contradiction. Suppose  there are $t_0\in [0 , \tau]$ and  $a\in A$ such that 
$$M=\max_{c\in A}\max_{ s\in[ -\tau, 0]}\ci\vc(s),\vu\cd<\ci\va(t_0),\vu\cd.$$
Since $\ci\va(0),\vu\cd\leq M $, $\va$ is smooth on $(0,\tau)$ and $t_0\neq 0$, by the mean value theorem, there exists $t_1\in (0 ,t_0)$ such that  
 \begin{align*}
   M&<\ci\va(t_1),\vu\cd,\\
   0&<\frac{d}{dt}\ci\va(t_1),\vu\cd.
 \end{align*}
In particular  $(t_1-\tau)\in [ -\tau,0]$, so then
\begin{align*}
  \frac{d}{dt}\ci\va(t_1),\vu\cd&= \sum_{b:b\neq a}H^{(ab)}(t_1)\ci \vb(t_1-\tau)-\va(t_1),\vu\cd\\
  &\leq \sum_{b\neq a}H^{(ab)}(t_1)\left(M -\ci  \va(t_1),\vu\cd\right)\\
  &\leq 0
\end{align*}
and  we get the contradiction. For the case of the minimum, apply the same reasoning to the vector $(-\vu)$.

To continue, we prove   inequality \eqref{cotaRV0} for the case $t\in[0,\tau]$. Fix $a\in A$ and $t_0\in[0,\tau]$, if  $|\va(t_0)|=0$ it is obvious. In other case, define 
$$\vu=\frac{\va(t_0)}{|\va(t_0)|}.$$
By inequality   \eqref{cotavelocidad} and the fact that $\vu$ is a unit vector, using  Cauchy–Schwarz inequality, we get that
$$|\va(t_0)|=\ci\va(t_0),\vu\cd\leq \max_{b\in A}\max_{s\in[ -\tau,0]}\ci\vb(s ),\vu\cd\leq R_V^0.$$
 
Now we have bounded  $\va(t)$ for each $a\in A$ and $t\in[0,\tau]$. Then  applying Theorem \ref{existencialocal} to the system \eqref{sistemaprincipal} for initial conditions on the interval of time $[0,\tau]$, we get a unique extension of solution to the interval $[\tau,2\tau]$. We  repeat this argument to prove inequality \eqref{cotavelocidad} for  the case $n=1$ and  inequality \eqref{cotaRV0} for the case $t\in[\tau,2\tau]$. We again applying Theorem \ref{existencialocal} to the system \eqref{sistemaprincipal}, but with initial conditions on $[\tau,2\tau]$ and so on. Finally, we get  the inequalities \eqref{cotavelocidad} for each $n\in\N_0$, \eqref{cotaRV0} to all $t\geq -\tau$ and global unique existence of solution. Note that by the uniqueness of solution it is $\mathcal{C}^1$ on $(0,\infty)$. 

To finish the proof, it remains to prove that $I_n\geq I_{n+1} $ and the inequality \eqref{cotadX}. Given  $n\in\N_0$, fix $a,\,b\in A$ and $t_1,t_2\in[n\tau,n\tau+\tau]$ such that 
$$I_{n+1}=\left|\va(t_1)-\vb(t_2)\right|.$$
Again, if $I_{n+1}=0$ then it is obvious, and if $I_{n+1}>0$, define
\begin{align*}
  \vu& =\frac{\va(t_1)-\vb(t_2)}{\left|\va(t_1)-\vb(t_2)\right|} .
\end{align*}
 Then 
\begin{align*}
  I_{n+1}&=\ci \va(t_1)-\vb(t_2),\vu\cd\leq \max_{c,d\in A}\max_{s,t\in[n\tau-\tau,n\tau]}\ci\vc(s)-\vd(t),\vu\cd \leq I_n
\end{align*}
and the first result is proved. For the other, note that 
\begin{align*}
  \left|\xa(t)-\xb(t-\tau)\right|&\leq \left|\xa(t)-\xa(t-\tau)\right|+\left|\xa(t-\tau)-\xb(t-\tau)\right|\\
  &\leq \int_{t-\tau}^t\left|\va(s)\right|\, ds+d_X(t-\tau)\\
  &\leq \int_{t-\tau}^tR_V^0\, ds+d_X(t-\tau)\\
  &\leq \Rv+d_X(t-\tau)
\end{align*}
and we get the lemma.
\end{proof}

\begin{lem}\label{cotasMm}
For all $a,\,b\in A$, unit vector $\vu$ and $n\in\N_0$, we have that
\begin{equation*}
  \begin{split}
  \dabv  (t) &\leq e^{-K(t-t_0)}\dabv  (t_0)+(1-e^{-K(t-t_0)})I_n,\\
  I_{n+1}&\leq e^{-K\tau}d_V(n\tau ) +(1-e^{-K\tau})I_{n },\\
  \end{split}
\end{equation*}
for all $t\geq t_0\geq  n\tau$.
\end{lem}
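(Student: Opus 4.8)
The plan is to reduce everything to two scalar one-particle differential inequalities for the projection onto the fixed unit vector $\vu$. First I would set $g_c(t):=\ci\vc(t),\vu\cd$, so that $\dabv(t)=g_a(t)-g_b(t)$, and differentiate along \eqref{sistemaprincipal} to get
$$\frac{d}{dt}g_a(t)=\sum_{b:b\neq a}H^{(ab)}(t)\left(\ci\vb(t-\tau),\vu\cd-g_a(t)\right).$$
Two ingredients control this expression. The normalization in \eqref{definicionH} gives $0\leq\sum_{b\neq a}H^{(ab)}(t)\leq K$, since each factor $\psi(\cdot)\leq\psi(0)=K$ and the $N-1$ summands carry the weight $1/(N-1)$. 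And \eqref{cotavelocidad} says that for every $s\geq n\tau-\tau$ the projection $g_c(s)$ lies between $m:=\min_{c}\min_{s\in[n\tau-\tau,n\tau]}\ci\vc(s),\vu\cd$ and $M:=\max_{c}\max_{s\in[n\tau-\tau,n\tau]}\ci\vc(s),\vu\cd$. For $t\geq n\tau$ we have $t-\tau\geq n\tau-\tau$, so $\ci\vb(t-\tau),\vu\cd\leq M$ and $g_a(t)\leq M$; since the factor $M-g_a(t)$ is then nonnegative and the weights are at most $K$, this yields $\frac{d}{dt}g_a(t)\leq K\left(M-g_a(t)\right)$. Bounding $\ci\vb(t-\tau),\vu\cd$ below by $m$ instead and using the weights on the now nonpositive factor $m-g_a(t)$ gives symmetrically $\frac{d}{dt}g_a(t)\geq K\left(m-g_a(t)\right)$, both valid for $t\geq n\tau$.

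For the first inequality I would subtract the upper bound for $g_a$ from the lower bound for $g_b$: with $D(t):=\dabv(t)$,
$$\frac{d}{dt}D(t)\leq K\left(M-g_a(t)\right)-K\left(m-g_b(t)\right)=K(M-m)-K\,D(t).$$
Because $\vu$ is a unit vector, Cauchy--Schwarz gives $M-m\leq I_n$, so $\frac{d}{dt}D(t)\leq KI_n-K\,D(t)$ for $t\geq n\tau$. Then the integrating factor $e^{Kt}$ and integration from $t_0$ to $t$ (both at least $n\tau$) produce $D(t)\leq e^{-K(t-t_0)}D(t_0)+\left(1-e^{-K(t-t_0)}\right)I_n$, the first claim; the solution being $\mathcal{C}^1$ for positive times by Lemma \ref{primerlema}, and continuous up to the endpoint, this integration is legitimate even when $t_0=0$.

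For the second inequality I would, after discarding the trivial case $I_{n+1}=0$, choose $a,b\in A$ and $t_1,t_2\in[n\tau,(n+1)\tau]$ realizing $I_{n+1}=\left|\va(t_1)-\vb(t_2)\right|$ and take $\vu=\left(\va(t_1)-\vb(t_2)\right)/I_{n+1}$, so $I_{n+1}=g_a(t_1)-g_b(t_2)$. Integrating the two one-particle inequalities from $n\tau$ gives $g_a(t_1)\leq M+e^{-K(t_1-n\tau)}\left(g_a(n\tau)-M\right)$ and $g_b(t_2)\geq m+e^{-K(t_2-n\tau)}\left(g_b(n\tau)-m\right)$. The hard part is that $t_1$ and $t_2$ need not agree, so the two exponential factors differ; I expect this to be where the argument must be handled with care. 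The resolution is a sign observation rather than a calculation: $g_a(n\tau)-M\leq 0$ and $g_b(n\tau)-m\geq 0$ by \eqref{cotavelocidad}, while $t_1,t_2\in[n\tau,(n+1)\tau]$ force $e^{-K(t_i-n\tau)}\in[e^{-K\tau},1]$, so replacing both factors by the common worst case $e^{-K\tau}$ only weakens the two bounds in the required directions. This gives $g_a(t_1)\leq M+e^{-K\tau}\left(g_a(n\tau)-M\right)$ and $g_b(t_2)\geq m+e^{-K\tau}\left(g_b(n\tau)-m\right)$, whence
$$I_{n+1}=g_a(t_1)-g_b(t_2)\leq\left(1-e^{-K\tau}\right)(M-m)+e^{-K\tau}\left(g_a(n\tau)-g_b(n\tau)\right).$$
Finishing with $M-m\leq I_n$ and $g_a(n\tau)-g_b(n\tau)=\ci\va(n\tau)-\vb(n\tau),\vu\cd\leq d_V(n\tau)$ (Cauchy--Schwarz, $\vu$ a unit vector) yields $I_{n+1}\leq e^{-K\tau}d_V(n\tau)+\left(1-e^{-K\tau}\right)I_n$.
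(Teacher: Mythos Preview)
Your proof is correct and follows essentially the same approach as the paper: define the projected extrema $M$ and $m$ over $[n\tau-\tau,n\tau]$, derive the one-particle differential inequalities $g_a'\leq K(M-g_a)$ and $g_b'\geq K(m-g_b)$ from the weight bound $\sum_{b\neq a}H^{(ab)}\leq K$ together with \eqref{cotavelocidad}, integrate via Gr\"onwall, and for the second inequality handle the mismatched times $t_1,t_2$ by the same sign observation that $g_a(n\tau)-M\leq 0$ and $g_b(n\tau)-m\geq 0$, so both exponentials may be replaced by $e^{-K\tau}$. The only cosmetic difference is that for the first inequality you pass directly to the scalar ODE $D'\leq K(M-m)-KD$ before integrating, whereas the paper integrates the two one-particle bounds separately and subtracts afterward; the content is identical.
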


\begin{proof}
Fix a unit vector $\vu$ and denote
\begin{align*}
  M&=\max_{c\in A}\max_{s\in[n\tau-\tau,n\tau]}\ci \vc(s),\vu\cd,\\
  m&=\min_{c\in A}\min_{s\in[n\tau-\tau,n\tau]}\ci \vc(s),\vu\cd.
\end{align*} 
Note that, by the Cauchy–Schwarz inequality, 
\begin{equation}\label{desigualdadMmI}
  \begin{split}
  M-m&=\max_{c,\,d\in A}\max_{s,\,t\in[n\tau-\tau,n\tau]} \ci \vc(s)-\vd(t),\vu\cd\\
  &\leq \max_{c,\,d\in A}\max_{s,\,t\in[n\tau-\tau,n\tau]} \left|\vc(s)-\vd(t)\right|\\
  &= I_n
  \end{split}
\end{equation}
Now we claim that for each $t\geq t_0\geq  n\tau$, we have that 
\begin{equation}\label{gronwall}
\begin{split}
  \ci\va(t),\vu\cd&\leq e^{-K(t-t_0)}\ci\va(t_0  ),\vu\cd+ \left(1-e^{-K(t-t_0)}\right) M ,\\
  \ci\vb(t),\vu\cd&\geq e^{- K(t-t_0)}\ci\vb(t_0 ),\vu\cd+  \left(1-e^{-K(t-t_0)}\right)m .
\end{split}
\end{equation}
If $t\geq    n\tau  $ then for each $c\in A$  we get that 
$$ \ci\vc(t-\tau)-\va(t),\vu\cd\leq M -\ci \va(t),\vu\cd$$
and the right-hand side is not negative by inequality  \eqref{cotavelocidad} of Lemma \ref{primerlema}. Then
\begin{align*}
  \frac{d}{dt}\ci\va(t),\vu\cd&=\sum_{c:c\neq a} H^{(ac)}(t)  \ci\vc(t-\tau)-\va(t),\vu\cd \\
  &\leq \sum_{c:c\neq a} H^{(ac)}(t)   \left(M -\ci \va(t),\vu\cd\right) \\
  &\leq \sum_{c:c\neq a} \frac{K}{N-1} \left(M -\ci \va(t),\vu\cd \right)\\
  &= K \left(M -\ci \va(t),\vu\cd \right)
\end{align*}
and it is enough to apply  Gr\"onwall's Lemma 
to get the first equation of \eqref{gronwall}. For the other   apply this to   particle $b$ and vector $(-\vu)$. 

To prove the first inequality of the lemma note that 
\begin{align*}
  \dabv(t)&=  \ci\va(t)-\vb(t),\vu\cd\\
  &\leq e^{-K(t-t_0)}\ci\va(t_0  )-\vb(t_0),\vu\cd+ \left(1-e^{-K(t-t_0)}\right) (M-m) ,
\end{align*}
hence it is enough  to apply inequalities \eqref{desigualdadMmI}. 

For the second one fix $a,\,b\in A$ and $t_1,\,t_2\in[n\tau,n\tau+\tau]$ such that
$$I_{n+1}=\left|\va(t_1)-\vb(t_2)\right|.$$
As we did in Lemma \ref{primerlema}, if $I_{n+1}=0$ the result is obvious, and if $I_{n+1}>0$   define
\begin{align*}
  \vu& =\frac{\va(t_1)-\vb(t_2)}{\left|\va(t_1)-\vb(t_2)\right|}.
\end{align*}
Then using inequalities \eqref{gronwall},  with $t_0=n\tau$, we get
\begin{align*}
   \ci\va(t_1),\vu\cd&\leq e^{-K(t-n\tau )}\ci\va(n\tau   ),\vu\cd+ \left(1-e^{-K(t-n\tau )}\right) M\\
   &\leq e^{-K(t-n\tau )}\left(\ci\va(n\tau   ),\vu\cd-M\right)+  M\\
   &\leq e^{-K \tau  }\left(\ci\va(n\tau   ),\vu\cd-M\right)+  M\\
    &\leq e^{-K \tau }\ci\va(n\tau),\vu\cd+ \left(1-e^{-K\tau}\right) M  
\end{align*}
because $M\geq \ci\va(n\tau   ),\vu\cd $ and $(t-n\tau)\leq \tau$. Similarly,
$$\ci\vb( t_2 ),\vu\cd\geq e^{- K\tau}\ci\va(n\tau),\vu\cd+  \left(1-e^{-K\tau}\right)m .$$
Finally, again by the Cauchy–Schwarz inequality, 
$$\dabv(n\tau)\leq d_V(n\tau)$$
and we get the  result  because
\begin{align*}
  I_{n+1}&=\ci\va(t_1)-\vb(t_2),\vu\cd\\
  &\leq e^{-K \tau }\dabv(n\tau)+ \left(1-e^{-K\tau}\right) (M-m).
\end{align*}
\end{proof}

\begin{lem}\label{mainlemma}
For each $n\geq 2$  we have that
\begin{equation*}
  \begin{split}
  I_{n+1}&\leq \left(1- e^{-K\tau}\int_{n\tau-2\tau}^{n\tau-\tau}\phi(s)\,ds\right)I_{n-2}
  \end{split}
\end{equation*}
where $I_n$, $K$ and $\phi$ given are given in Definition \ref{definiciones}.
\end{lem}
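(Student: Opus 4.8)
The plan is to peel the single grid point $n\tau$ off the iteration, reduce the three‑step estimate to a one‑step contraction of the velocity diameter there, and then read that contraction off the lower bound for the communication rate that is built into $\phi$. Concretely, I would feed in the second inequality of Lemma \ref{cotasMm}, namely $I_{n+1}\le e^{-K\tau}d_V(n\tau)+(1-e^{-K\tau})I_n$, together with the monotonicity $I_n\le I_{n-2}$ of Lemma \ref{primerlema}. A direct substitution shows that the assertion of the lemma would follow from the single bound
\[
d_V(n\tau)\le\Big(1-\int_{n\tau-2\tau}^{n\tau-\tau}\phi(s)\,ds\Big)I_{n-2},
\]
since $e^{-K\tau}(1-J)I_{n-2}+(1-e^{-K\tau})I_{n-2}=(1-e^{-K\tau}J)I_{n-2}$ with $J:=\int_{n\tau-2\tau}^{n\tau-\tau}\phi$. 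Thus the task becomes to show that the spread of the velocities \emph{at the instant} $n\tau$ has shrunk by the factor $1-J$ relative to the spread $I_{n-2}$ recorded two delay‑intervals earlier.

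To prove this I would fix the unit vector $\vu$ and the particles $a,b$ realizing $d_V(n\tau)=\dabv(n\tau)$, so that $a$ and $b$ are the maximizer and the minimizer of $\ci\,\cdot\,,\vu\cd$ at time $n\tau$; I set $M,m$ for the extremes of $\ci\vc(s),\vu\cd$ over $c\in A$ and $s\in[n\tau-3\tau,n\tau-2\tau]$, so that $M-m\le I_{n-2}$ by \eqref{desigualdadMmI} and $m\le\ci\vc(t),\vu\cd\le M$ for every $t\ge n\tau-3\tau$ by \eqref{cotavelocidad}. I would then track the two trajectories $\ci\va(t),\vu\cd$ and $\ci\vb(t),\vu\cd$ across the intervals $[n\tau-2\tau,n\tau-\tau]$ and $[n\tau-\tau,n\tau]$, repeating the Gr\"onwall computation behind \eqref{gronwall} but \emph{keeping} the term that was discarded there. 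Writing $(d/dt)\ci\va(t),\vu\cd=\big(\sum_cH^{(ac)}(t)\big)(M-\ci\va(t),\vu\cd)-\sum_cH^{(ac)}(t)\big(M-\ci\vc(t-\tau),\vu\cd\big)$ exhibits a non‑negative \emph{extra pull}. Its total weight is controlled from below because $\sum_{c}H^{(ac)}(t)\ge\psi(\Rv+d_X(t-\tau))$ (summing $N-1$ copies of the bound from \eqref{definicionH} and \eqref{cotadX} cancels the factor $1/(N-1)$), and then the monotonicity of $\psi$ turns $\psi(\Rv+d_X(t-\tau))$ into a quantity comparable to $\phi$ on $[n\tau-2\tau,n\tau-\tau]$; the truncation of $\phi$ by $e^{-2K\tau}/\tau$ is exactly what guarantees $J\le e^{-2K\tau}<1$, so that the contraction factor $1-J$ remains in $(0,1)$ and the estimate is never vacuous.

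The main obstacle — the same phenomenon that makes the naive inequalities \eqref{novalen} fail — is that, because of the delay, the spread $M-m$ enters the dynamics only through the retarded values $\ci\vc(t-\tau),\vu\cd$, whose argument sweeps $[n\tau-3\tau,n\tau-2\tau]$ as $t$ runs over $[n\tau-2\tau,n\tau-\tau]$. A gap present somewhere in that interval must be shown to create a \emph{quantitative} pull during $[n\tau-2\tau,n\tau-\tau]$ and then to survive transport through the following interval down to the instant $n\tau$; this two‑phase coupling is precisely why the estimate must reach back to $I_{n-2}$ rather than to $I_n$, and it is where the two factors $e^{-K\tau}$ (one inside $\phi$, one produced by the reduction above) are spent. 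To keep the $N$‑independence I would not charge the extra pull to a single retarded particle, which would cost a factor $1/(N-1)$, but argue instead by the dichotomy that in any configuration either the maximizer $a$ is dragged downward or the minimizer $b$ is pushed upward at the full rate $\sum_cH^{(\cdot c)}\ge\psi(\cdots)$. Adding the downward pull on $a$ to the upward pull on $b$, evaluating the two trajectories at $t=n\tau$, and invoking $M-m\le I_{n-2}$ should then yield the displayed one‑step contraction and hence the lemma.
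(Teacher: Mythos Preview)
Your reduction to the one–step estimate
\[
d_V(n\tau)\le\Big(1-\int_{n\tau-2\tau}^{n\tau-\tau}\phi(s)\,ds\Big)I_{n-2}
\]
via the second inequality of Lemma~\ref{cotasMm} and $I_n\le I_{n-2}$ is exactly the paper's Step~3. The gap is in how you propose to prove this core estimate.

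When you differentiate $\dabv(t)$ and split as you describe, the cross terms --- the $c=b$ summand in the $a$--equation and the $c=a$ summand in the $b$--equation --- leave, after you have extracted the main contraction $-e^{K\tau}\phi(t-\tau)(M-m)$, a residual of the form $-\tfrac{1}{N-1}e^{K\tau}\phi(t-\tau)\,\dabv(t-\tau)$. If $\dabv(t-\tau)\ge 0$ this residual only helps; if $\dabv(t-\tau)<0$ it fights the contraction, and for $N=2$ it cancels it exactly (each sum has a single term, and the retarded inputs are then pushing $a$ up and $b$ down). Your ``dichotomy'' does not resolve this: $a,b$ are the extremal particles at the \emph{instant} $n\tau$, and nothing prevents their retarded projections from having crossed on $[n\tau-2\tau,n\tau-\tau]$, in which case neither is pulled toward the interior at the full rate you claim.

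The paper closes this with a different case split, on the sign of $\dabv$ over $[n\tau-2\tau,n\tau-\tau]$. If $\dabv\ge 0$ throughout that interval, the Gr\"onwall computation on $[n\tau-\tau,n\tau]$ goes through verbatim (with $M,m$ taken over $[n\tau-2\tau,n\tau-\tau]$, one step later than your choice). If instead $\dabv(t_0)<0$ for some $t_0$ there, one ignores $\phi$ altogether and applies the first inequality of Lemma~\ref{cotasMm} from $t_0$ to $n\tau$, obtaining $\dabv(n\tau)\le(1-e^{-2K\tau})I_{n-2}$ directly. This is the actual role of the cap $\phi\le e^{-2K\tau}/\tau$: not merely to keep $1-J$ positive, as you say, but to force $1-J\ge 1-e^{-2K\tau}$, so that the crude bound in the sign--change case already meets the target.
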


\begin{proof}
We sketch the proof in three steps, the first two dedicated to prove that
\begin{eqnarray}\label{desigualdaddV}
  d_V(n\tau)&\leq  \left(1- \int_{n\tau-\tau}^{n\tau}\phi(s)\,ds\right)I_{n-2}
\end{eqnarray}
and the  third to prove the  inequality of the lemma itself. As before, assume $d_V(n\tau)\neq 0$ and fix $a$, $b$ such that  
$$d_V(n\tau)=\left|\va(n\tau)-\vb(n\tau)\right|$$
and define
$$\vu=\frac{\va(n\tau)-\vb(n\tau)}{\left|\va(n\tau)-\vb(n\tau)\right|}.$$
Then 
$$d_V(n\tau)=\ci\va(n\tau)-\vb(n\tau),\vu\cd=\dabv(n\tau).$$ 
We have two cases to study, first we analyze when $\dabv(t_0)<0$ for some time $t_0\in[ n\tau-2\tau,n\tau-\tau]$ and later when $\dabv\geq 0$ for all times in $[ n\tau-2\tau,n\tau-\tau]$.

\textit{Step 1.}  Suppose there exists  $t_0\in[ n\tau-2\tau,n\tau-\tau]$ such that $\dabv(t_0)<0$. By definition of $\phi$ we get 
\begin{equation}\label{primercaso}
  1-\int_{n\tau-\tau}^{n\tau}\phi(s)\,ds \geq 1-\int_{n\tau-\tau}^{n\tau}\frac{e^{-2K\tau}}{\tau}\,ds\geq 1-e^{-2K\tau}.
\end{equation}
Then, by the first inequality of Lemma \ref{cotasMm}, we have that 
\begin{align*}
  \dabv(n\tau)&\leq e^{-K(n\tau-t_0)}\dabv  (t_0)+(1-e^{-K(n\tau-t_0)})I_{n-2},\\
  &\leq (1-e^{-K(n\tau-t_0)})I_{n-2},\\
  &\leq (1-e^{-2K\tau })I_{n-2}
\end{align*}
and by inequality \eqref{primercaso} we get \eqref{desigualdaddV}.

\textit{Step 2.} Now suppose the opposite, that is,  
\begin{equation}\label{segundocaso}
  \dabv(t_0)\geq 0
\end{equation} 
for each $t_0\in[ n\tau-2\tau,n\tau-\tau]$ and denote 
\begin{align*}
  M&=\max_{c\in A}\max_{s\in[n\tau-2\tau,n\tau-\tau]}\ci \vc(s),\vu\cd,\\
  m&=\min_{c\in A}\min_{s\in[n\tau-2\tau,n\tau-\tau]}\ci \vc(s),\vu\cd. 
\end{align*} 

Then, if $t\in [n\tau-\tau,n\tau]$, we get 
\begin{align*}
  \frac{d}{dt}\dabv(t )&= \sum_{c: c\neq a} H^{(ac)}( t)  \ci \vc( t -\tau)-\va( t),\vu\cd\\
  &\quad +\sum_{c: c\neq b} H^{(bc)}( t)  \ci\vb( t  )-\vc( t-\tau),\vu\cd\\
  &= \sum_{ c:c\neq a} H^{(ac)}( t)  \left(\ci \vc( t -\tau),\vu\cd-M+M-\ci\va( t),\vu\cd\right)\\
  &\quad +\sum_{c: c\neq b} H^{(bc)}( t)  \left(\ci\vb( t  ),\vu\cd-m+m-\ci\vc( t-\tau),\vu\cd\right).
\end{align*}

By definition of  $H$  (see Equation \eqref{definicionH}) we have that 
$$\sum_{c,c\neq a} H^{(ac)}( t) \leq \sum_{c,c\neq a} \frac{1}{N-1} \psi(0) \leq K   $$
and, by inequality \eqref{cotadX} of Lemma \ref{primerlema},
\begin{align*}  
  H^{(ac)}( t)&=\frac{1}{N-1}\psi\left(\left|\xa( t)-\xc( t-\tau)\right|\right)\\
  &\geq \frac{1}{N-1}\psi\left(\Rv+d_X(t-\tau)\right)\\
  &\geq \frac{1}{N-1}\psi\left(\Rv+\max_{s\in[0,t]}d_X(s-\tau)\right)\\
  &\geq \frac{ e^{K\tau}\phi(t-\tau)}{N-1} .
\end{align*}
  Therefore
\begin{align*}
  \frac{d}{dt}\dabv(t)&\leq      \sum_{ c:c\neq a ,b   } \frac{ e^{K\tau}\phi(t-\tau)}{N-1} \left( \ci\vc(t-\tau) ,\vu\cd-M\right) \\
  &\quad  + \sum_{c:c\neq a ,b   } \frac{ e^{K\tau}\phi(t-\tau)}{N-1} \left(m- \ci\vc(t-\tau) ,\vu\cd \right)  \\ 
  &\quad +\frac{ e^{K\tau}\phi(t-\tau)}{N-1} \left( \ci\vb(t-\tau) ,\vu\cd-M\right)\\
  &\quad  +\frac{ e^{K\tau}\phi(t-\tau)}{N-1} \left(m- \ci\va(t-\tau) ,\vu\cd \right)\\
  &\quad +K\left(M-\ci\va(t),\vu\cd\right)\\
  &\quad +K\left(\ci\vb(t),\vu\cd-m\right)\\
  &\leq  -  e^{K\tau}\phi(t-\tau)(M-m)+ K\left(M-m-\dabv(t)\right)\\
  &=     \left(K-e^{K\tau}\phi(t-\tau)\right)I_{n-1}-K \dabv(t)
\end{align*}
where we use inequality \eqref{segundocaso}. By Gr\"onwall's Lemma, we get 
\begin{align*}
  \dabv(n\tau)&\leq   e^{-K\tau}\dabv(n\tau-\tau)+I_{n-1}\int_{n\tau-\tau}^{n\tau}e^{-K(n\tau-s)}(K-e^{K\tau}\phi(s-\tau))\, ds\\
  &=  e^{-K\tau}\dabv(n\tau-\tau)\\
  &\quad +I_{n-1}\left(1-e^{-K\tau}-\int_{n\tau-\tau}^{n\tau}e^{-K(n\tau-s)}  e^{K\tau}\phi(s-\tau) \, ds\right)\\
  &\leq  e^{-K\tau}I_{n-1} +I_{n-1}\left(1-e^{-K\tau}-\int_{n\tau-\tau}^{n\tau} \phi(s-\tau) \, ds\right)\\
  &\leq \left(1 -\int_{n\tau-2\tau}^{n\tau-\tau} \phi(s) \, ds\right)I_{n-1}\\
  &\leq \left(1 -\int_{n\tau-2\tau}^{n\tau-\tau} \phi(s) \, ds\right)I_{n-2}
\end{align*}
and the  inequality \eqref{desigualdaddV} is proved.

\textit{Step 3.} 
To finish the proof, we use  Lemma \ref{cotasMm} and inequality \eqref{desigualdaddV} to get 
\begin{align*}
  I_{n+1}&\leq e^{-K\tau}d_V(n\tau ) +(1-e^{-K\tau})I_{n }\\
  &\leq e^{-K\tau}\left(1- \int_{n\tau-2\tau}^{n\tau-\tau}\phi(s)\,ds\right)I_{n-2}+(1-e^{-K\tau})I_{n-2 }\\
  &\leq \left(1- e^{-K\tau}\int_{n\tau-2\tau}^{n\tau-\tau}\phi(s)\,ds\right)I_{n-2}
\end{align*}
and the result is proved.
\end{proof}

\subsection{Proof of Theorem \ref{maintheorem}}\label{flocking}
We need to find an upper bound for $d_X$. To this purpose we define a function $\mathcal{L}$ which implies that 
$$\psi\left(\tau R_V^0+\max_{s\in[0,t]}d_X(s)\right)$$
has an   under bound for all $t\geq-\tau$. Finally, this gives the exponential decay of $d_V$. 

\begin{proof} We  define the function 
$$\D(t):=\left\{\begin{array}{ll}
  I_0&\text{if } t\in[-\tau,2\tau],\\
  \D(n\tau)\left(1- e^{-K\tau}\int_{n\tau}^{t}\phi(s)\,ds\right)^{1/3} &\text{if }t\in(n\tau,n\tau+\tau],\,n\in\N,\,n\geq 2. 
\end{array}\right.
$$

Note that it  is continuous, non-increasing and almost every time differentiable. We sketch the proof in three steps. First we claim that $\D(t)$ is an upper bound for  $I_n$ if $t\in[-\tau,n\tau]$ for all $n\in\N_0$. Then we show that $d_X$ is upper bounded, and finally we obtain the exponential decrease in time for  $d_V$.

\textit{Step 1.} Since $\phi(t)$ is non-increasing we have that
\begin{equation*}
  1- e^{-K\tau}\int_{m\tau}^{m\tau+\tau}\phi(s)\,ds\leq 1- e^{-K\tau}\int_{n\tau}^{n\tau+\tau}\phi(s)\,ds
\end{equation*}
if $m\leq n$ and then
\begin{equation}\label{tercios}
  \left(1- e^{-K\tau}\int_{n\tau-2\tau}^{n\tau-\tau}\phi(s)\,ds\right) \leq \prod_{i=0}^2\left(1- e^{-K\tau}\int_{n\tau-2\tau+i\tau}^{n\tau-\tau+i\tau}\phi(s)\,ds\right)^{1/3}.
\end{equation}
Note that the right-hand side of the inequality above is a telescoping product that equals  exactly
$$\frac{\D(n\tau+\tau)}{\D(n\tau-2\tau)}.$$
 Now we claim that
$$I_{n+1}\leq \D(t)$$
for each $t\in[-\tau,n\tau+\tau]$ and $n\in\N_0$. We proceed by induction on $n$.  If $n\leq 2$, by definition of $\D$ we get 
$$I_2\leq I_1\leq I_0 =\D(t)$$
for each $t\in[-\tau,2\tau]$. Then  suppose   that 
$$I_{n-2}\leq D(n\tau-2\tau) $$
and we want to prove that
$$I_{n+1}\leq D(t) $$
for each $t\in[-\tau,n\tau+\tau].$ By Lemma \ref{mainlemma} and inequality \eqref{tercios},  we have that 
\begin{align*}
  I_{n+1}&\leq  \left(1- e^{-K\tau}\int_{n\tau-2\tau}^{n\tau-\tau}\phi(s)\,ds\right)I_{n-2}\\
  &\leq  \left(1- e^{-K\tau}\int_{n\tau-2\tau}^{n\tau-\tau}\phi(s)\,ds\right)D(n\tau-2\tau)\\
  &\leq \D(n\tau+\tau)\\
  &\leq \D(t)
\end{align*}
for each $t\in[-\tau,n\tau+\tau]$ since $\D$ is a non-increasing function.

\textit{Step 2.} Note that for almost every time
$$\frac{d}{dt}\left(\max_{s\in[0,t]}d_X(s)\right)\leq \left|\frac{d}{dt}d_X(t)\right|\leq d_V(t).$$
For the first inequality, notice  that for almost every time, either  $\max_{s\in[0,t]}d_X(s)$ is constant, or increases as $d_X(t)$. And the second follows by definition of $d_X$ and $d_V$.  Then define the function %\textcolor{red}{(Lyapunov functional?)}
$$\mathcal{L}(t)=\D(t)+\frac{e^{-K\tau}}{3}\int_0^{\Rv+\max_{s\in[0,t]}d_X(s)}\min\left\{ e^{-K\tau} \psi(s),\frac{e^{-2K\tau}}{\tau}\right\}\,ds.$$
For each $n\in\N_0$ and for almost every time $t\in(n\tau,n\tau+\tau)$ we have that 
\begin{align*}
  \frac{d}{dt}\mathcal{L}(t)&=\frac{d}{dt}\D(t)+\frac{e^{-K\tau}}{3}\phi(t)\frac{d}{dt}\left(\Rv+\max_{s\in[0,t]}d_X(s)\right)\\
  &\leq \frac{-e^{-K\tau}}{3} \phi(t)\left(1- e^{-K\tau}\int_{n\tau}^{t}\phi(s)\,ds\right)^{-2/3}\D(n\tau)+\frac{e^{-K\tau}}{3}\phi(t)d_V(t)\\
  &\leq \frac{e^{-K\tau}}{3}\phi(t)\left(d_V(t)-\D(n\tau)\right)\\
  &\leq 0
\end{align*}
where we are    using that 
$$d_V(t)\leq I_{n+1}\leq I_n\leq \D(n\tau).$$
Therefore $\mathcal{L}$ is a  non-increasing function  and because $\D$ is non-negative we get 
\begin{equation}\label{desigualdadD}
  \frac{e^{-K\tau}}{3}\int_0^{\Rv+\max_{s\in[0,\infty)}d_X(s)}\min\left\{ e^{-K\tau} \psi(s),\frac{e^{-2K\tau}}{\tau}\right\}\,ds\leq   \mathcal{L}(0)=I_0. 
\end{equation}
Recall Hypothesis \eqref{hipotesisteorema}, this is, 
$$\int_0^\infty \psi(s)\,ds=\infty$$
which combined with \eqref{desigualdadD} implies there exists  $d^*$ such that
$$\Rv+\max_{s\in[0,\infty)}d_X(s)\leq d^*$$ 
and, therefore, $d_X$ is upper bounded.

\textit{Step 3.} To finish the proof observe that $\psi$ and $\phi$ are positive by Hypothesis \eqref{hipotesisteorema}, and the fact that $d_X$ is upper  bounded implies that $\phi$ has a positive lower bound  that we denote by $\phi(\infty)$, this is,
$$\left(1- e^{-K\tau}\int_{n\tau}^{n\tau+\tau}\phi(s)\,ds\right)^{1/3} \leq \left(1- e^{-K\tau} \phi(\infty)\tau\right)^{1/3}<1$$
for each $n\geq 2$. Then we define
\begin{equation}\label{definicionC}
  C:=\frac{1}{3\tau}\ln\left( \frac{1}{1-e^{-K\tau}\tau\phi(\infty)} \right),
\end{equation}
which is a positive constant, and  it happens that 
$$\D(n\tau)\leq I_0 e^{-C(n-2)\tau}$$
for each $n\geq 2$. Finally, given $t\geq 2\tau$, there is an integer  $m\geq 2$  such that \linebreak $t\in[m\tau,m\tau+\tau]$
and
$$d_V(t)\leq I_{m+1}\leq \D(m\tau+\tau)\leq I_0e^{-C(m-1)\tau}\leq I_0e^{-C(t-2\tau)}$$
which proves the theorem.
\end{proof}

\section{The case of two particles}\label{ejemplos}

In this section we asume that $A=\{a,b\}$ and define
\begin{align*}
  X(t) &:=\xa(t)-\xb(t),\\
  V(t) &:=\va(t)-\vb(t).
\end{align*}

\subsection{The case without flock}

The following proposition adapts the idea in    \cite[section IV]{cucker2007emergent} to prove there is no  unconditional flocking when $\beta>1/2$. Note that this happens for every delayed time  $\tau>0$.

\begin{prop}\label{dosparticulas} Fix $\tau>0$ and suppose 
$$H^{(cd)}(t)= \frac{1}{\left(1+\left[\xc(t)-\xd(t-\tau)\right]^2\right)^\beta}$$
with $\beta>1/2$. Then there exist initial conditions $\left(X(s),V(s)\right)_{s\in[-\tau,0]}$ such that   the system \eqref{sistemaprincipal} has no asymptotic  flocking. 
\end{prop}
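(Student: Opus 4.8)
The plan is to exploit the reflection symmetry of the two–particle system to reduce \eqref{sistemaprincipal} to a scalar delay equation, and then run a continuation (bootstrap) argument showing that a fast, well–separated antisymmetric pair never flocks, precisely because the influence weight becomes integrable in time when $\beta>1/2$. This mirrors the reasoning of \cite{cucker2007emergent}, the only genuine novelty being the control of the delayed terms.

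First I would restrict to antisymmetric initial data, $\xa_0(s)=-\xb_0(s)$ and $\va_0(s)=-\vb_0(s)$ for $s\in[-\tau,0]$. Since $H^{(ab)}$ and $H^{(ba)}$ depend on the positions only through a square, the map $(\xa,\va,\xb,\vb)\mapsto(-\xb,-\vb,-\xa,-\va)$ sends solutions to solutions; for antisymmetric history this image has the same initial data, so by the uniqueness part of Lemma~\ref{primerlema} the relations $\xa\equiv-\xb$, $\va\equiv-\vb$ persist for all time. Writing $X=2\xa$ and $V=2\va$, the system collapses to
\begin{equation*}
\dot X(t)=V(t),\qquad \dot V(t)=-H(t)\bigl(V(t-\tau)+V(t)\bigr),\quad H(t)=\Bigl(1+\tfrac14\bigl(X(t)+X(t-\tau)\bigr)^2\Bigr)^{-\beta},
\end{equation*}
where $H$ is the common value of $H^{(ab)}=H^{(ba)}$. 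I would then prescribe the constant profile $V\equiv V_0>0$ on $[-\tau,0]$ together with the increasing history $X(s)=X_0+V_0 s$ there, with $X_0>V_0\tau$ large (to be fixed below).

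Next I would prove by continuation that $V(t)>V_0/2$ for all $t\ge0$. On any interval where $V\ge V_0/2>0$ one has $V(t-\tau)+V(t)>0$, hence $\dot V<0$, so $V$ is decreasing and $0<V(t-\tau)+V(t)\le 2V_0$. Positivity of $V$ also forces at least linear growth of the separation, $X(t)+X(t-\tau)\ge c_0+\tfrac{V_0}{2}t$ with $c_0=2X_0-V_0\tau$, whence $H(t)\le 4^\beta\,(c_0+\tfrac{V_0}{2}t)^{-2\beta}$. Because $\beta>1/2$ this bound is integrable and
\begin{equation*}
J:=\int_0^\infty H(s)\,ds\le \frac{2\cdot4^\beta}{V_0}\,\frac{c_0^{\,1-2\beta}}{2\beta-1}\xrightarrow[X_0\to\infty]{}0 .
\end{equation*}
Fixing $X_0$ so large that $J<1/4$ and integrating, on the continuation interval $V(t)=V_0-\int_0^t H(s)\bigl(V(s-\tau)+V(s)\bigr)\,ds\ge V_0-2V_0J>V_0/2$. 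This strict inequality prevents $V$ from ever reaching the value $V_0/2$, so the continuation interval is all of $[0,\infty)$ and $V(t)>V_0/2$ for every $t$. Consequently $d_V(t)=|\va(t)-\vb(t)|=|V(t)|=V(t)>V_0/2$, so $\lim_{t\to\infty}d_V(t)\neq0$ and the constructed solution has no asymptotic flocking.

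The main obstacle I anticipate is the bootstrap step in the presence of the delay: one must ensure the linear lower bound for $X(t)+X(t-\tau)$ — and hence the smallness of $J$ — holds already on the initial lag $[0,\tau]$, where $X(t-\tau)$ is dictated by the prescribed history rather than by the dynamics. Choosing a strictly positive, increasing position history as above is exactly what closes the estimate; the rest is the self-consistent continuation (that $V\ge V_0/2$ simultaneously guarantees $\dot V<0$, the bound $V(t-\tau)+V(t)\le 2V_0$, and the growth of $X$).
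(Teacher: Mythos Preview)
Your argument is correct: the antisymmetric reduction, the bootstrap $V>V_0/2$, and the integrability of the weight when $\beta>1/2$ all work as you describe. The one cosmetic point is that the quantity you call $J$ should, strictly speaking, be the integral of the explicit upper bound $4^\beta(c_0+\tfrac{V_0}{2}s)^{-2\beta}$ rather than of the solution-dependent $H(s)$; since the pointwise inequality $H(s)\le 4^\beta(c_0+\tfrac{V_0}{2}s)^{-2\beta}$ holds only on the continuation interval, the bootstrap really compares $\int_0^t H$ with the full integral of the explicit majorant. Your intent is clear and the argument closes.

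The paper follows the same overall strategy (a continuation argument showing $V$ stays bounded away from zero because the influence weight is time-integrable once positions separate linearly), but it does \emph{not} use the antisymmetric ansatz: its initial positions are not reflections of one another, so $H^{(ab)}\neq H^{(ba)}$ in general and the equations do not collapse to your scalar form. As a consequence the paper must insert an auxiliary step proving that the mixed delayed differences $\va(t-\tau)-\vb(t)$ and $\va(t)-\vb(t-\tau)$ stay nonnegative before it can integrate $\dot V$; only then can it bound $\dot V$ below by $-\text{const}\cdot X'(t)/(1+X(t)-\tau)^{2\beta}$ and finish as in \cite{cucker2007emergent}. Your symmetry reduction bypasses that intermediate step entirely and yields a cleaner proof; the paper's version, in exchange, shows that the failure of flocking does not hinge on any special symmetry of the data.
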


\begin{proof}
Define the initial conditions 
\begin{align*}
  \left\{\begin{aligned}
  \xa(s)&=s+\tau^{1/(2\beta)}+2\tau+\left(3\,\frac{2^\beta }{2\beta-1}\right)^{1/(2\beta-1)} & \text{ if } s\in[-\tau,0],\\
  \va(s)&=1 & \text{ if } s\in[-\tau,0],\\
  \xb(s)&= -s & \text{ if } s\in[-\tau,0],\\
  \vb(s)&=-1 & \text{ if } s\in[-\tau,0],
  \end{aligned}
  \right.\\
\end{align*}
and denote
$$\mathcal{S}=\left\{t\geq 0: V(s)\geq 1 \text{ for each } s\in[-\tau,t]\right\}.$$
Because $V(0)=2$ it happens that  $\mathcal{S}$ is not empty. Therefore denote $s^*=\sup\mathcal{S}$ and we claim that $s^*=\infty$. For contradiction assume $s^*<\infty$. Since $V$ is continuous we have that $V(s^*)=1$. We sketch   the proof in two steps. First we prove that 
\begin{equation}\label{Vpositivas}
  \begin{split}
  \va(t-\tau)-\vb(t)\geq 0,\\
  \va(t )-\vb(t-\tau)\geq 0
  \end{split}
\end{equation} 
for all $t\in[-\tau,s^*]$ and in the second step we conclude that $V(s^*)>1$ and get the contradiction.

\textit{Step 1.} Since $V(t)$ is positive for times less than $s^*$ we have that 
\begin{equation}\label{Xcrece}
  X(t)\geq X(-\tau)>\tau^{1/(2\beta)}+\tau
\end{equation}
if $t\in[-\tau,s^*].$ Note that by inequality \eqref{cotadX} of Lemma \ref{primerlema} and the fact that $\Rv= 1$, we have that 
\begin{equation}\label{cotauno}
  |\va|,\,|\vb|\leq 1.
\end{equation}
 Then  for $t\in[0,s^*]$, we get 
\begin{equation}\label{desigualdadX}
  \begin{split}
  \left|\xa(t-\tau)-\xb(t)\right|&\geq  \left|\xa(t )-\xb(t )\right|-\int_{t-\tau}^t \va(s)\,ds\\
  &=X(t ) -\tau,\\
  \end{split}
\end{equation}
and, similarly,
\begin{equation}\label{desigualdadX2}
  \left|\xb(t-\tau)-\xa(t)\right|\geq   X(t ) -\tau.
\end{equation}

 Now, for $t\in[0, s^*]$ and using \eqref{Xcrece}, \eqref{cotauno} and \eqref{desigualdadX}, we have that
\begin{align*}
  \va(t-\tau)-\vb(t)&= \va(t-\tau)-\vb(t-\tau)-\int_{t-\tau}^t \frac{d}{dt}\vb(s)\, ds\\
  &= V(t-\tau) -\int_{t-\tau}^t \frac{\va(s-\tau)-\vb(s)}{\left(1+(\xa(s-\tau)-\xb(s))^2\right)^\beta}\, ds\\
  &\geq V(t-\tau)-\int_{t-\tau}^t \frac{2 }{\left(1+(X(t ) -\tau)^2\right)^\beta}\, ds\\
  &\geq V(t-\tau)-\int_{t-\tau}^t \frac{2 }{\left(1+\tau^{1/\beta}\right)^\beta}\, ds\\
  &\geq V(t-\tau)- \frac{2\tau}{\left(1+\tau^{1/\beta}\right)^\beta}\\
  &> V(-\tau)-2\\
  &\geq 0,
\end{align*}
since $V$ increases. Similarly, we have that 
$$\va(t)-\vb(t-\tau)\geq V(t-\tau )-2> V(-\tau )-2 \geq0 $$
and we get \eqref{Vpositivas}.

\textit{Step 2.} Since  
\begin{equation}\label{desigualdadbeta}
  (1+|X|)^{2\beta}\leq 2^\beta (1+X^2)^\beta
\end{equation}
 using inequalities \eqref{Vpositivas}, we have that 
\begin{align*}
  \frac{d}{dt}V(t)&= -\frac{\va(t)-\vb(t-\tau)}{\left(1+\left(\xa(t)-\xb(t-\tau)\right)^2\right)^{\beta}}-\frac{\va(t-\tau)-\vb(t)}{\left(1+\left(\xb(t)-\xa(t-\tau)\right)^2\right)^{\beta}}\\
  &\geq -2^\beta\left(\frac{\va(t)-\vb(t-\tau)}{\left(1+\left|\xa(t)-\xb(t-\tau)\right|\right)^{2\beta}}+\frac{\va(t-\tau)-\vb(t)}{\left(1+\left|\xb(t)-\xa(t-\tau)\right|\right)^{2\beta}}\right).
\end{align*}
By inequalities \eqref{desigualdadX} and \eqref{desigualdadX2}, we get 
\begin{align*} 
  \frac{d}{dt}V(t) &\geq -2^\beta\left(\frac{\va(t)-\vb(t-\tau)}{\left(1+ X(t) -\tau \right)^{2\beta}}+\frac{\va(t-\tau)-\vb(t)}{\left(1+ X(t) -\tau \right)^{2\beta}}\right)\\
   &\geq -2^\beta \frac{V(t)+V(t-\tau)}{\left(1+X(t) -\tau \right)^{2\beta}}.
\end{align*}
Because $V(t)\geq 1$ in $\mathcal{S}$ we have that 
$$ 2 V(t) \geq 2 =I_0\geq V(t-\tau) $$
where we are using inequality \eqref{cotavelocidad} of Lemma \ref{primerlema} and we get
\begin{align*}
   \frac{d}{dt}V(t)&\geq -2^\beta  \frac{3V(t)}{\left(1+ X(t) -\tau \right)^{2\beta}}  = -2^\beta   \frac{3X'(t)}{\left(1+ X(t)-\tau \right)^{2\beta}}.
\end{align*}
Now integrating  over time and using  definition of $X(0)$, we get  that 
\begin{align*}
  V(s^*)&\geq V(0)+\int_0^{s^*}  2^\beta  \frac{-3X'(s)}{\left(1+ X(s) -\tau \right)^{2\beta}}\, ds\\
  &=2+  \frac{2^\beta}{2\beta-1}  \left( \frac{ 3 }{\left(1+ X(s^*) -\tau \right)^{2\beta-1}}-\frac{3}{\left(1+ X(0) -\tau \right)^{2\beta-1}}\right)\\
  &>2+     \frac{ 2^\beta\cdot 3}{(2\beta-1)\left(1+ X(s^*)-\tau \right)^{2\beta-1}}-\frac{3\frac{2^\beta}{2\beta-1}}{\left(1+ \tau+ \left(3\,\frac{2^\beta }{2\beta-1}\right)^{1/(2\beta-1)} -\tau \right)^{2\beta-1}} \\
  &>2+   \frac{  2^\beta\cdot 3}{(2\beta-1)\left(1+ X(s^*) -\tau \right)^{2\beta-1}}   -1\\
  &>1
\end{align*}
and we get the contradiction. Then $s^*=\infty$ which means that $V(t) >1$ for all $t\geq -\tau$ and therefore there is no asymptotic flocking.
\end{proof}

\subsection{Example 1}\label{ejemplo1}

Fix $\varepsilon$ such that $0<\varepsilon<<\tau$ and suppose
\begin{equation}
  \left\{\begin{aligned}
  \va(s)&=1 & \text{ if } s\in[-\tau,-\varepsilon],\\
  \va(s)&=-s/\varepsilon & \text{ if } s\in[-\varepsilon,0],\\
  \vb(s)&=0 & \text{ if } s\in[-\tau,-\varepsilon],\\
  \va(s)&=1+s/\varepsilon & \text{ if } s\in[-\varepsilon,0].
  \end{aligned}
  \right.
\end{equation}

Then, for all $t\in [-\tau,-\varepsilon]\cup[0,\tau-\varepsilon]$, we claim that 
\begin{equation*}
 d_V(t)=  I_0=I_1.
\end{equation*} 
 
We now prove this claim. It is clear that $d_V(t)=1$ for $t\in [-\tau,-\varepsilon]$. For $t\in(0,\tau-\varepsilon]$  we have that 
\begin{align*}
  \frac{d}{dt}\va(t)&=H^{(ab)}(t)\left(\vb(t-\tau)-\va(t)\right)\\
  & = -H^{(ab)}(t) \va(t) 
\end{align*}
and because $\va(0)=0$ we get $\va(t)=0$ for all $t\in[0,\tau-\varepsilon]$. And similarly $\vb(t)=1$ for all $t\in[0,\tau-\varepsilon]$.

Note that in  this case   equations \eqref{novalen} are not valid because $d_V$ is constant  during the time in $[0,\tau-\varepsilon]$. For clarity, we have simulated this example and plotted  in Figure \ref{figura1}. 

\begin{figure}
	\begin{minipage}{0.49\textwidth}
	   \includegraphics[width=\textwidth]{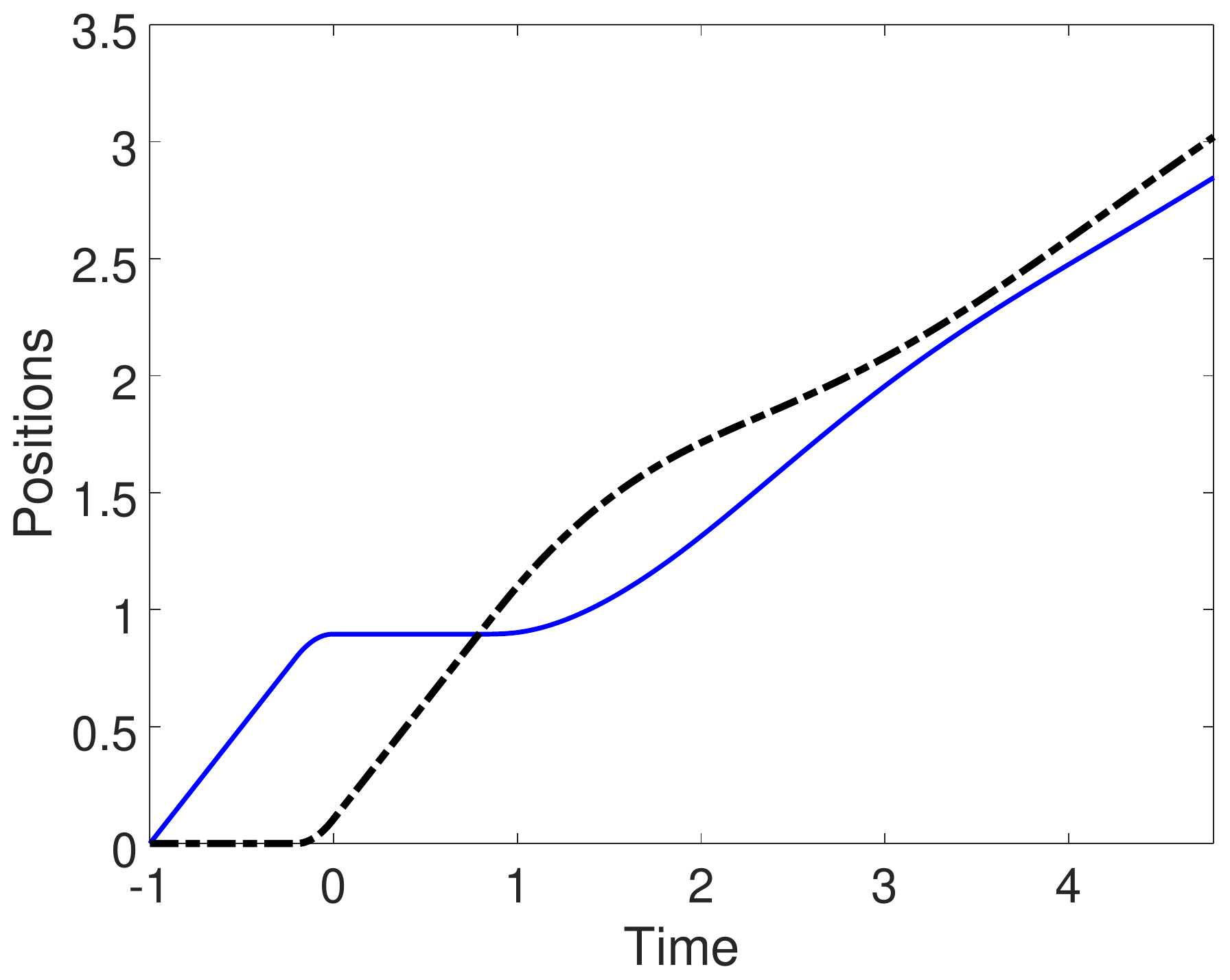}
	\end{minipage}
	\begin{minipage}{0.49\textwidth}
	   \includegraphics[width=\textwidth]{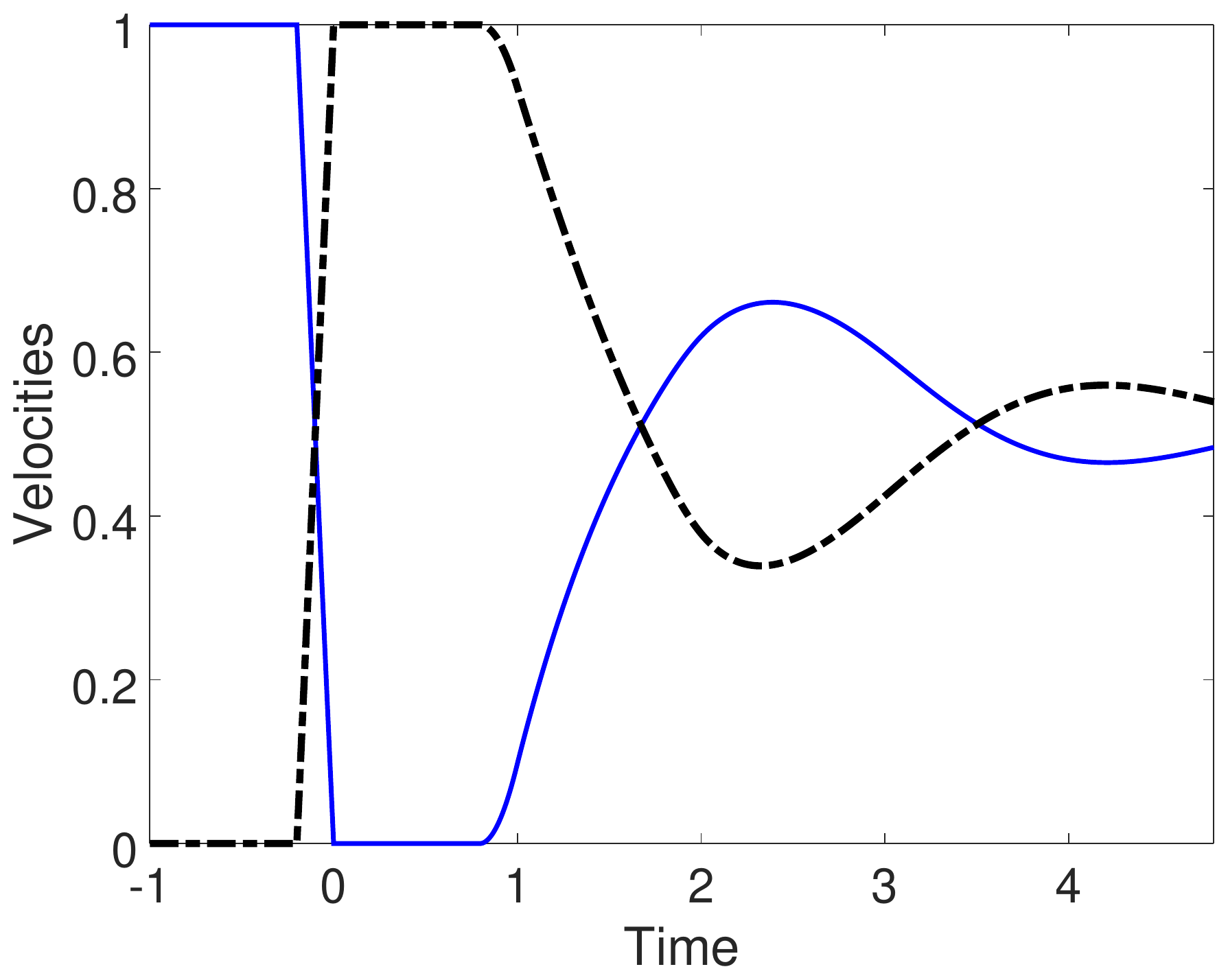}
	\end{minipage}	
	\caption{We simulate and plot Example 1 with $\tau=1$ and $\varepsilon= 0.2$.}\label{figura1}	
\end{figure}

\subsection{Example 2}\label{ejemplo2}

  Suppose that  $\tau=1$, $\psi(x)=1/\sqrt{1+x^2}$ and 
\begin{equation}
  \left\{\begin{aligned}
  \xa(s)&=1+(1+s)^2 & \text{ if } s\in[-1,0],\\
  \va(s)&=2(1+s) & \text{ if } s\in[-1,0],\\
  \xb(s)&=(1+s)^2 & \text{ if } s\in[-1,0],\\
  \vb(s)&=2(1+s) & \text{ if } s\in[-1,0].
  \end{aligned}
  \right.
\end{equation}
Note that   $d_V(s)=0$ for all $s\in[-1, 0]$ and yet we claim that 
\begin{equation}\label{desigualdadI0}
  \max_{s\in[0,\tau]}d_V(s)> \frac{I_0}{10}.
\end{equation}

We now prove this claim. Using the second inequality of  \eqref{gronwall} in the proof of Lemma \ref{cotasMm} for $\vu =1$, for each $t\in[0,1]$, we get
$$\va(t) \geq e^{-t}\va(0)+(1-e^{-t})\min_{c\in A}\min_{s\in [-\tau,0]}\vc(s)=2e^{-t}. $$
Then
\begin{equation}
  \va(t)\geq 2e^{-t}\geq 2e^{-1/2}\geq 2t
\end{equation}
if $t\in[0,1/2]$ and   the same holds for $\vb$. Therefore
$$\xa(t)-t^2\geq 2+\int_0^t2s\, ds-t^2=2$$
and we get 
\begin{align*}
 \frac{d}{dt} \va(t)&=  \frac{  2t-\va(t) }{\sqrt{1+(t^2-\xa(t))^2}} \geq \sqrt{2}\frac{ 2t-\va(t) }{ 1+  \xa(t)-t^2}
\end{align*}
where, as in \eqref{desigualdadbeta},  we are using that
$$\sqrt{2(1+X^2)}\geq  (1+|X|).$$
Now since 
$$\frac{d}{dt}(1+\xa(t)-t^2)=\va(t)-2t$$
we get 
\begin{align*}
  \va(t)&\geq \va(0)+\int_0^t (-\sqrt{2})\frac{ \va(s) -2s}{ 1+  \xa(s)-s^2}\, ds\\
  &=2-\sqrt{2}\left(\ln\left(1+  \xa(s)-s^2\right)\Big|_0^t\right)\\
  &=2-\sqrt{2} \ln\left(\frac{ 1+  \xa(t)-t^2}{3}\right)\\
  &=2-\sqrt{2} \ln\left( \frac{1+ 2+\int_0^t \va(s)\,ds-t^2}{3} \right)\\
  &=2 -\sqrt{2} \ln\left(1+\frac{  \int_0^t 2\,ds-t^2}{3} \right)\\
  &=2-\sqrt{2} \ln\left( 1+ \frac{ 2t-t^2}{3} \right)\\
  &\geq 2 -\sqrt{2} \ln\left( \frac{15}{4} \right)
\end{align*}
for $t\in[0,1/2].$ 

Additionally, applying \eqref{ejemplo2} to $\vb$ we have
$$\xb(t)-1-t^2\geq \xb(0)+\int_0^t\vb(s)\, ds-1-t^2\geq \int_0^t2s\, ds-t^2=0$$
and then
\begin{align*}
 \frac{d}{dt} \vb(t)&=  \frac{  2t-\vb(t) }{\sqrt{1+(1+t^2-\xb(t))^2}} \leq  \frac{ 2t-\vb(t) }{ 1+\xb(t)-1-t^2}
\end{align*}
because  $ 2t-\vb(t)\leq 0$ and $\sqrt{1+X^2}\leq 1+|X|.$ Similarly to the case  $\va$, we get 
\begin{align*}
  \vb(t)&\leq \va(0)+\int_0^t (-1)\frac{ \vb(s) -2s}{    \xb(s)-s^2}\, ds\\
  &=2- \ln\left(    \xb(t) -t^2 \right)\\
  &\leq 2- \ln\left(1+\int_0^t\vb(s)\, ds-t^2\right).
\end{align*}
Now assume 
$$\vb(t_0)=\min_{s\in[0,1/2]}\vb(s)$$
with $t_0\in[0,1/2]$. We claim that $\vb(t_0)\leq \alpha:=1.58$. We proceed by contradiction and suppose 
$$\vb(t)> \alpha$$
for each $t\in[0,1/2]$.  Then  
\begin{align*}
  \vb(1/2)&\leq 2- \ln\left(1+\int_0^{1/2}\alpha \, ds-\frac{1}{4}\right)\\
  &=2- \ln\left(1+ \frac{\alpha}{2}  -\frac{1}{4}\right)\\
  &<\alpha 
\end{align*}
and we obtain a contradiction. Finally
\begin{align*}
  d_V(t_0)&=\va(t_0)-\vb(t_0) \geq 2 -\sqrt{2} \ln\left( \frac{15}{4} \right)-\alpha> \frac{1}{10},
\end{align*}
and we get inequality \eqref{desigualdadI0}.
\begin{figure}
	\begin{minipage}{0.49\textwidth}
	  \includegraphics[width=\textwidth]{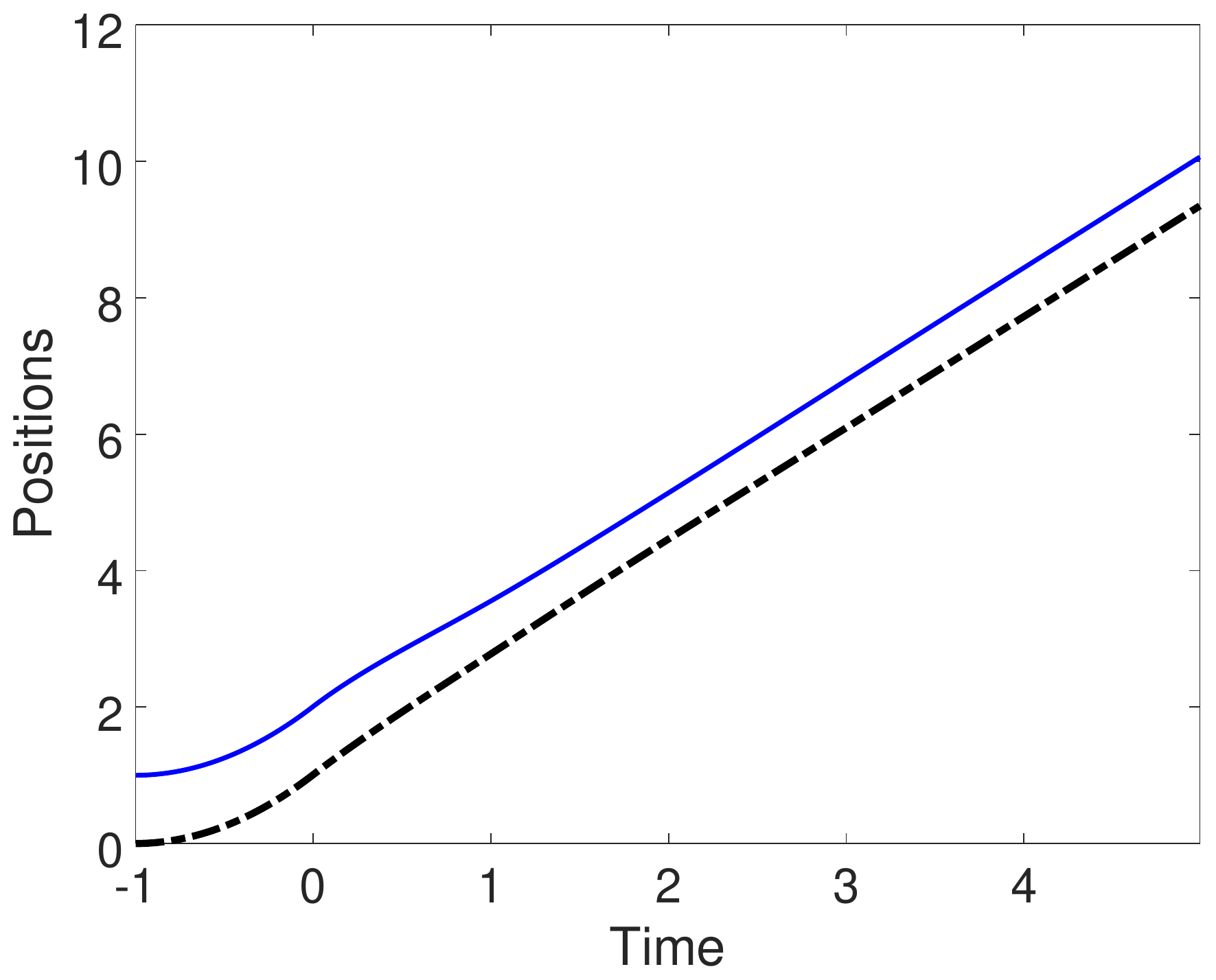}
	\end{minipage}
	\begin{minipage}{0.49\textwidth}
	   \includegraphics[width=\textwidth]{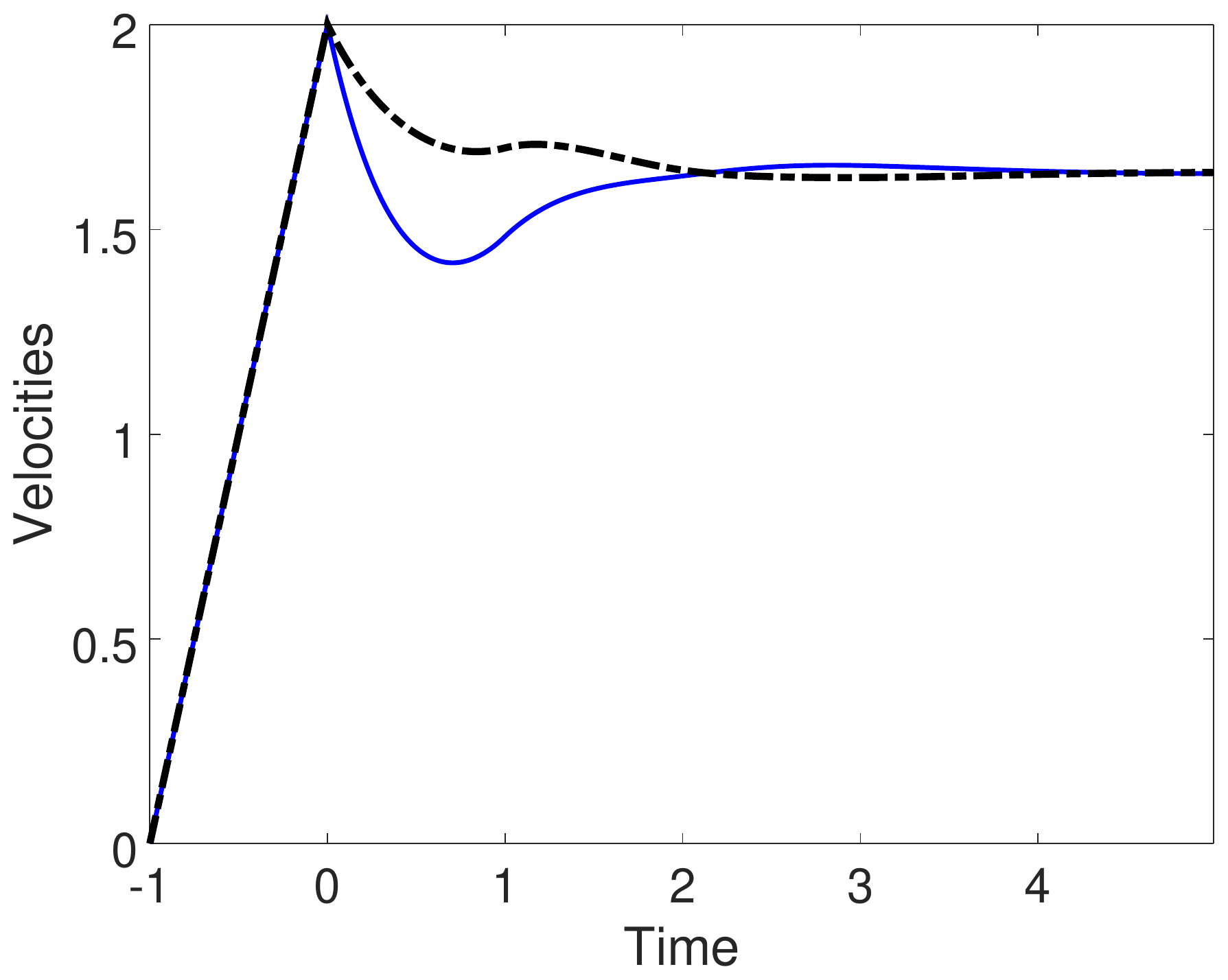}
	\end{minipage}	
	\caption{We simulate and plot Example   2}\label{figura2}	
\end{figure}

\addcontentsline{toc}{chapter}{Bibliograf\'ia}
\bibliography{biblio}
\bibliographystyle{plain}
\end{document}